\numberwithin{equation}{section}
\newtheorem{theorem}{Theorem}[section]
\newtheorem{proposition}[theorem]{Proposition}
\newtheorem{corollary}[theorem]{Corollary}
\newtheorem{lemma}[theorem]{Lemma}
\newtheorem{definition}[theorem]{Definition}
\newtheorem{remark}[theorem]{Remark}
\newtheorem{claim}[theorem]{Claim}
\newtheorem{conjecture}[theorem]{Conjecture}
\newcommand{\bb}[1]{\mathbb{#1}}
\DeclareMathOperator{\supp}{Supp}
\DeclareMathOperator{\conv}{Conv}
\DeclareMathOperator{\ann}{Ann}
\DeclareMathOperator{\dist}{Dist}
\DeclareMathOperator{\nexpl}{NEL}
\DeclareMathOperator{\nexpd}{ONED}
\newcommand{\sob}[1]{\ensuremath{{\mathbin |}\raise-.4ex\hbox{$#1$}}}
\title{ON PERIODIC DECOMPOSITIONS AND NONEXPANSIVE LINES}
\author{Cleber Fernando Colle}
\address{Rio de Janeiro State University, Rio de Janeiro, Brazil}
\email{cleber.colle@ime.uerj.br}
\keywords{\(\bb{Z}^2\)-subshifts, Nonexpansive subdynamics, Periodic decompositions.}
\begin{document}
\begin{abstract}
In his Ph.D. thesis, Michal Szabados conjectured that for a not fully periodic configuration with a min\-i\-mal periodic decomposition the nonexpansive lines are exactly the lines that contain a period for some periodic configuration in such decomposition. In this paper, we study Szabados's conjecture. First, we show that we may consider a minimal periodic decomposition where each periodic configuration is defined on a finite alphabet. Then we prove that Szabados's conjecture holds for a wide class of configurations, which includes all not fully periodic low convex pattern complexity configurations.  
\end{abstract}

\maketitle 

\section{Introduction} 

In his Ph.D. thesis \cite{Szabadosthesis}, with an algebraic viewpoint on symbolic configurations, Michal Szabados showed that every low pattern complexity configuration can be decomposed into a finite sum of periodic configurations. Such a periodic decomposition theorem is the heart of new recent results related to Nivat's conjecture and other topics \cite{kari,Szabados,Moutot,Moutot19,KariSzabados,Moutot21,Colle22}. If we consider a not fully periodic configuration with a minimal periodic decomposition, i.e., a periodic decomposition with the smallest possible number of periodic configurations, a nonexpansive line contains a period of some periodic configuration. Michal Szabados conjectured that the conversely also holds.

To make the statements precise we need to introduce some terminologies.

\subsection{Symbolic dynamics}

Let $\mathcal{A}$ be an alphabet with at least two elements. The elements of $\mathcal{A}^{\bb{Z}^2}$, called \emph{configurations}, have the form $\eta = (\eta_{g})_{g \in \bb{Z}^2}$, where $\eta_g \in \mathcal{A}$ for all $g \in \bb{Z}^2$. For each \(u \in \bb{Z}^2\), the \emph{shift application} \(T^u : \mathcal{A}^{\bb{Z}^2} \longrightarrow \mathcal{A}^{\bb{Z}^2}\) is defined by \((T^u\eta)_g = \eta_{g-u}\) for all \(g \in \bb{Z}^2\) and all \(\eta \in \mathcal{A}^{\bb{Z}^2}\). A configuration $\eta \in \mathcal{A}^{\bb{Z}^2}$ is said to be \emph{periodic} if there exists a non-zero vector $h \in \bb{Z}^2$, called \emph{period} of $\eta$, such that \(T^h\eta = \eta\). If \(\eta\) has two periods linearly independents over \(\bb{R}^2\), we say that \(\eta\) is \emph{fully periodic}. 

Let \(Orb \, (\eta) = \{T^u\eta : u \in \bb{Z}^2\}\) denotes the \emph{\(\bb{Z}^2\)-orbit of \(\eta \in \mathcal{A}^{\bb{Z}^2}\)}. If \(\mathcal{A}\) is a finite alphabet endowed with the discrete topology, then $\mathcal{A}^{\bb{Z}^2}$ equipped with the product  topology is a metrizable compact space. In particular, for all \(\eta \in \mathcal{A}^{\bb{Z}^2}\), the set \(\overline{Orb \, (\eta)}\), where the bar denotes the closure, is a compact \emph{subshift}, i.e., a closed sub\-set invariant by the $\bb{Z}^2$-action $T^u$, $u \in \bb{Z}^2$.

If \(\mathcal{A}\) is finite, we say that a configuration \(\eta \in \mathcal{A}^{\bb{Z}^2}\) has \emph{low pattern complexity} if \(|\{T^u\eta\sob{\mathcal{S}} : u \in \bb{Z}^2\}| \leq |\mathcal{S}|\) holds for some non-empty, finite set \(\mathcal{S} \subset \bb{Z}^2\), where $\cdot \sob{\mathcal{S}}$ means the restriction to the set $\mathcal{S}$. If in addition $\mathcal{S}$ is \emph{convex}, i.e., a subset of $\bb{Z}^2$ whose convex hull in $\bb{R}^2$, denoted by $\conv(\mathcal{S})$, is closed and $\mathcal{S} = \conv(\mathcal{S}) \cap \bb{Z}^2$, we say that \(\eta\) has \emph{low convex pattern complexity}.   

From now on, we will assume that $\mathcal{A}$ is a finite alphabet endowed with the dis- cre\-te topology.

\subsection{Nonexpansive lines}

Let $\ell \subset \bb{R}^2$ be a line through the origin. Given \(t > 0\), the $t$-neighbourhood of $\ell$ is defined as \[\ell^{t} = \{g \in \bb{Z}^2 : \dist(g,\ell) \leq t\}.\] Following Boyle and Lind \cite{boyle}, we say that $\ell$ is an \emph{expansive} line on \(\overline{Orb \, (\eta)}\) if there exists $t>0$ such that \[\forall \ x,y \in \overline{Orb \, (\eta)}, \quad x\sob{\ell^{t}} = y\sob{\ell^{t}} \implies x = y.\] Other\-wise, $\ell$ is called a \emph{nonexpansive} line on \(\overline{Orb \, (\eta)}\). 

For a configuration $\eta \in \mathcal{A}^{\bb{Z}^2}$\!, we use $\nexpl(\eta)$ to denote the set for\-med by the nonexpansive lines on \(\overline{Orb \, (\eta)}\). A particular case of Boyle-Lind Theorem \cite[Theorem~3.7]{boyle} implies that \(\nexpl(\eta)\) has at least one element if the subshift $\overline{Orb \, (\eta)}$ is infinite. An immediate application of Boyle-Lind Theorem allows us to state the following result:

\begin{theorem}\label{corollary_Boyle-Lind_tmh}
Let $\eta \in \mathcal{A}^{\bb{Z}^2}$ be a configuration. Then \(\nexpl(\eta) = \emptyset\) if and only if \(\eta\) is fully periodic.
\end{theorem}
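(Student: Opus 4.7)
The plan is to reduce both implications to statements about finiteness of $\overline{Orb \, (\eta)}$ and then either quote or directly verify the corresponding behaviour of nonexpansive lines. The organising observation is that $\eta$ is fully periodic if and only if its stabiliser $\{u \in \bb{Z}^2 : T^u\eta = \eta\}$ contains two $\bb{R}$-linearly independent vectors, which in turn is equivalent to the stabiliser having finite index in $\bb{Z}^2$, and hence to $Orb \, (\eta)$ being a finite set (and therefore closed, so $\overline{Orb \, (\eta)} = Orb \, (\eta)$).

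For the forward implication I would argue the contrapositive: if $\eta$ is not fully periodic, then $Orb \, (\eta)$, and a fortiori $\overline{Orb \, (\eta)}$, is infinite. The quoted special case of the Boyle--Lind Theorem then produces at least one nonexpansive line, so $\nexpl(\eta) \neq \emptyset$.

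For the reverse implication, suppose $\eta$ is fully periodic with linearly independent periods generating a finite-index sublattice $\Lambda \subset \bb{Z}^2$. Fix a bounded set $D \subset \bb{Z}^2$ of coset representatives for $\bb{Z}^2/\Lambda$. Because shifts commute, every $x \in Orb \, (\eta) = \overline{Orb \, (\eta)}$ inherits the periods of $\eta$ and is therefore completely determined by $x\sob{D}$. Given any line $\ell$ through the origin, take $t$ larger than the diameter of $D$; then $\ell^t \supset B(0,t)$ contains a translate of $D$, which is again a full set of representatives for $\bb{Z}^2/\Lambda$. Hence $x\sob{\ell^t} = y\sob{\ell^t}$ forces $x = y$ on that system of representatives, and by periodicity $x = y$ everywhere. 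Thus every $\ell$ is expansive on $\overline{Orb \, (\eta)}$, and $\nexpl(\eta) = \emptyset$.

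The substantive input is Boyle--Lind on the ``infinite $\Rightarrow$ nonexpansive line'' side; the potential obstacle is the elementary geometric check in the converse that the $t$-neighbourhood of an arbitrary line eventually contains a full set of $\Lambda$-coset representatives, which is handled uniformly by the inclusion $\ell^t \supset B(0,t)$ and boundedness of $D$.
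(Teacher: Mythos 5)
Your proof is correct and follows essentially the route the paper intends: the paper states this result as an immediate consequence of the quoted special case of the Boyle--Lind theorem (a nonexpansive line exists whenever $\overline{Orb\,(\eta)}$ is infinite) and gives no further details. Your write-up supplies exactly the two omitted ingredients --- the equivalence between full periodicity and finiteness of the orbit, and the elementary check that for a finite orbit of fully periodic configurations every line is expansive because $\ell^t \supset B(0,t)$ eventually contains a full set of coset representatives --- both of which are sound.
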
  

When $\eta \in \mathcal{A}^{\bb{Z}^2}$ is a periodic configuration, but not fully periodic, it is easy to see that the unique line $\ell \in \nexpl(\eta)$, which exists due to Boyle-Lind Theorem, contains any period for $\eta$. Hence, any configuration $\eta \in \mathcal{A}^{\bb{Z}^2}$ where $\nexpl(\eta)$ has at least two lines can not be periodic.

\subsection{The algebraic viewpoint}

In \cite{KariSzabados}, Kari and Szabados introduced an algebraic viewpoint on symbolic configurations. To follow their approach, we consider \(R = \bb{Z}\) or some finite field, and we represent any configuration \(\eta \in R^{\bb{Z}^2}\) as a formal power series over two variable \(x_1\) and \(x_2\) with coefficients \(\eta_g \in R\), i.e., as an element of \[R[[X^{\pm 1}]] = \left\{\sum_{g \in \bb{Z}^2} a_gX^g : a_g \in R\right\},\] where $g = (g_1,g_2) $ and $X^g$ is a shorthand for $x_1^{g_1}x_2^{g_2}$. Let \(R[X^{\pm 1}] \subset R[[X^{\pm 1}]]\) denote the set of Laurent polynomials with coefficients in \(R\). Given a Laurent polynomial $\varphi(X) = a_1X^{u_1}+\cdots+a_nX^{u_n}$, with $a_i \in R$ and $u_i \in \bb{Z}^2$, and a configuration $\eta \in R[[X^{\pm 1}]]$, we may consider the configuration \(\varphi \eta \in R[[X^{\pm 1}]]\), where
\begin{equation}\label{defproduct}
(\varphi \eta)_g = a_1\eta_{g-u_1}+\cdots+a_n\eta_{g-u_n} \quad \forall g \in \bb{Z}^2.
\end{equation}      
Let $\eta \in \mathcal{A}^{\bb{Z}^2}$, with $\mathcal{A} \subset R$, be a configuration. A Laurent polynomial $\varphi \in R[X^{\pm 1}]$ \emph{annihilates $\eta$} if and only if \(\varphi \eta = 0\), where by \(0\) we mean the zero configuration. The set of Laurent polynomials $\varphi \in R[X^{\pm 1}]$ that annihilates $\eta \in \mathcal{A}^{\bb{Z}^2}$\! is de\-no\-ted by $\ann_R(\eta)$. In this algebraic setting, $\eta \in R[[X^{\pm 1}]]$ is periodic of period $h \in \bb{Z}^2$ if and only if $(X^{ h}-1)\eta = 0$. 

We remark that the operations in (\ref{defproduct}) are the binary operations of $(R,+,\cdot)$. The multiplicative and additive identities on $R$ are denoted by $1$ and $0$, respectively, and, for any $x,y \in R$, $x-y := x+(-y)$, where $-y \in R$ denotes the inverse additive of $y$. In the most part of this work, we will consider \(R = \bb{Z}_p\) for some prime number \(p \in \bb{N}\footnote{We use $\bb{N} = \{1,2,\ldots\}$ and $\bb{Z}_+ = \bb{N} \cup \{0\}$.}\).

\begin{theorem}[Kari and Szabados \cite{KariSzabados}]\label{theorKS}
Let $\eta \in \mathcal{A}^{\bb{Z}^2}$, with $\mathcal{A} \subset \bb{Z}$, be a low pattern complexity configuration. Then there exist periodic configurations \(\eta_1, \ldots, \eta_m \in \bb{Z}[[X^{\pm 1}]]\) such that \(\eta = \eta_1+\cdots+\eta_m\).
\end{theorem}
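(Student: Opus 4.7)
My approach is to exploit the low pattern complexity hypothesis to produce a nonzero Laurent polynomial annihilating $\eta$, then to use the algebraic structure of such an annihilator to decompose $\eta$ into a finite sum of periodic configurations. The plan proceeds in three stages: (i) produce a nontrivial element of $\ann_{\bb{Z}}(\eta)$; (ii) factor this annihilator into simpler pieces and induce a matching additive decomposition of $\eta$; and (iii) identify each summand as a periodic configuration over $\bb{Z}$.

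For stage (i) I would run a pigeonhole argument on patterns. Since $|\{T^u\eta\sob{\mathcal{S}} : u \in \bb{Z}^2\}| \leq |\mathcal{S}|$, any $|\mathcal{S}|+1$ translates $T^{u_1}\eta,\dots,T^{u_{|\mathcal{S}|+1}}\eta$ have restrictions to $\mathcal{S}$ that are linearly dependent in the integer lattice $\bb{Z}^{|\mathcal{S}|}$. Sliding the window through translates of $\mathcal{S}$ and combining the resulting dependencies (using either compactness of the space of normalised integer coefficient vectors, or a rank-stabilisation argument on nested windows) produces a relation that holds globally, i.e.\ a nonzero $\varphi \in \bb{Z}[X^{\pm 1}]$ with $\varphi\eta = 0$.

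Stage (ii) is the algebraic heart of the proof. I would reduce modulo each prime $p$: over $\bb{Z}_p$, or after a finite extension, every Laurent polynomial factors as a product of \emph{line polynomials}, i.e.\ polynomials whose support lies on a single line through the origin, and each line polynomial further factors into binomials of the form $X^h - c$. Binomials with $c$ a root of unity are precisely those that annihilate configurations periodic along $h$. Applying B\'ezout to pairwise coprime blocks of the factorisation yields a decomposition $\bar\eta = \bar\eta_1 + \cdots + \bar\eta_k$ modulo $p$ in which each $\bar\eta_i$ is periodic; Galois-averaging over conjugate roots of unity descends this from the algebraic closure back to $\bb{Z}_p$. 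Finally, a Chinese-Remainder-style glueing across the finitely many primes appearing in the content of $\varphi$ assembles these modular pieces into periodic configurations $\eta_1,\ldots,\eta_m \in \bb{Z}[[X^{\pm 1}]]$ with $\eta = \eta_1 + \cdots + \eta_m$.

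The main obstacle I foresee is the glueing phase: one must verify that the modular decompositions are mutually compatible so that they assemble into genuinely integer-valued periodic components summing to $\eta$, which requires careful bookkeeping of the primes involved and the choice of annihilator factorisations. A secondary technical difficulty is ensuring that binomial factors $X^h - \zeta$ with $\zeta$ a nontrivial root of unity in a field extension really give rise, after Galois-averaging, to periodic components over the base ring, rather than merely configurations annihilated by a cyclotomic-type polynomial without being globally periodic themselves.
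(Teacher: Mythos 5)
This theorem is imported from Kari--Szabados \cite{KariSzabados}; the paper you are annotating gives no proof of it, so your proposal has to be measured against the original argument. Your overall architecture (annihilator from low complexity, then an algebraic decomposition) matches theirs, but both of your key steps have genuine gaps. In stage (i), the observation that any $|\mathcal{S}|+1$ restrictions are linearly dependent in $\bb{Z}^{|\mathcal{S}|}$ is vacuous --- it holds for \emph{every} configuration and uses nothing about complexity --- while the at most $|\mathcal{S}|$ \emph{distinct} patterns need not be linearly dependent at all (they could form a basis), so no amount of ``combining dependencies'' or rank stabilisation produces a global relation from this. The correct mechanism is affine, not linear: at most $|\mathcal{S}|$ points of $\bb{Q}^{|\mathcal{S}|}$ lie on a common affine hyperplane, giving $\sum_{d} a_d\,\eta_{u+d} = b$ for all $u$ with the $a_d$ not all zero; one then multiplies by $X^{e}-1$ to kill the constant $b$ and obtain a genuine annihilator. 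This is fixable, but as written the step fails.

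The more serious gap is your stages (ii)--(iii). Reducing modulo each prime and attempting a Chinese-Remainder reassembly cannot work as described: an integer-valued configuration is not determined by its reductions modulo finitely many primes, and no mechanism is offered for lifting the modular periodic decompositions to periodic configurations in $\bb{Z}[[X^{\pm 1}]]$ summing to $\eta$ --- the ``glueing phase'' you flag as the main obstacle is not a technicality but the entire content of the theorem in your formulation, and you leave it unresolved. Kari and Szabados avoid this problem rather than solve it: working over $\bb{C}$, they use Hilbert's Nullstellensatz and a line-polynomial analysis of the complex variety of $\ann(\eta)$ to show that $\ann_{\bb{Z}}(\eta)$ already contains a polynomial of the special form $(X^{h_1}-1)\cdots(X^{h_m}-1)$ with $h_i \in \bb{Z}^2$. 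From such an annihilator the decomposition over $\bb{Z}$ follows directly by a telescoping induction on $m$ using the integration lemma (which appears in this paper as Lemma~\ref{lem_existmulconf}): one constructs $\eta_1$ periodic with period $h_1$ so that $\eta-\eta_1$ is annihilated by $(X^{h_2}-1)\cdots(X^{h_m}-1)$, and iterates. No reduction modulo primes, no Galois averaging, and no glueing across primes occurs anywhere in the actual proof.
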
 

The configurations $\eta_i \in \bb{Z}[[X^{\pm 1}]]$ in the Theorem \ref{theorKS} may be defined on infinite alphabets (see \cite[Example 17]{KariSzabados}).

Let $\eta \in \mathcal{A}^{\bb{Z}^2}$, with \(\mathcal{A} \subset R\), and suppose $\eta_1, \ldots, \eta_m \in R[[X^{\pm 1}]]$ are periodic configurations such that $\eta = \eta_1+ \cdots+\eta_m$. We call $\eta = \eta_1+ \cdots+\eta_m$ a \emph{$R$-periodic decomposition}. If $m \leq n$ for every $R$-pe\-ri\-o\-dic de\-com\-po\-si\-tion $\eta = \eta'_1+ \cdots+\eta'_{n}$, we call $\eta = \eta_1+\cdots+\eta_m$ a \emph{$R$-mi\-ni\-mal periodic decomposition} and the number $m$ is called the \emph{\(R\)-order of $\eta$}.

\begin{remark}\label{rem_vectorminimalperioddecomposit}
Let $\eta \in \mathcal{A}^{\bb{Z}^2}$, with \(\mathcal{A} \subset R\), and suppose $\eta = \eta_1+ \cdots+\eta_m$ is a \(R\)-minimal periodic decomposition.
\begin{enumerate}[(i)]\setlength{\itemsep}{6pt}
	\item If \(\eta\) has \(R\)-order \(m \geq 2\), then any two periods for $\eta_i$ and $\eta_j$, with $i \neq j$, are in distinct directions;
	
	\item If \(\eta\) has \(R\)-order \(m = 1\), then either \(\eta\) periodic or \(\eta\) is fully periodic,
\end{enumerate}
\end{remark}

\begin{remark}
If $\eta = \eta_1+ \cdots+\eta_m$ is a \(\bb{Z}_p\)-minimal periodic decomposition, then each configuration \(\eta_i \in \bb{Z}_p[[X^{\pm 1}]]\), with \(1 \leq i \leq m\), is defined on a finite alphabet.  
\end{remark}

\medbreak

If $\eta = \eta_1+\cdots+\eta_m$ is a $\bb{Z}$-minimal periodic decomposition, it is easy to see that every nonexpansive line on \(\overline{Orb \, (\eta)}\) contains a period of some $\eta_i$, with $1 \leq i \leq m$ (see Remark~\ref{rem_converSazabados'sconj}). In his Ph.D. thesis \cite{Szabadosthesis}, Michal Szabados conjectured that the conversely also holds:

\begin{conjecture}[Szabados]\label{Szabados'sconj}
Let $\eta \in \mathcal{A}^{\bb{Z}^2}$, with $\mathcal{A} \subset \bb{Z}$, be a configuration. If $\eta$ is not fully periodic and $\eta = \eta_1+\cdots+\eta_m$ is a $\bb{Z}$-minimal periodic decomposition, then, for each $1 \leq i \leq m$, a line $\ell \subset \bb{R}^2$ contains a period for $\eta_i$ if and only if $\ell \in \nexpl(\eta)$.
\end{conjecture}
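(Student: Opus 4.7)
The easy direction of the conjecture, namely $\ell \in \nexpl(\eta) \Rightarrow \ell$ contains a period of some $\eta_i$, is indicated in the paper. My task is therefore the converse: for each $1 \leq i \leq m$, the unique direction $\ell_i$ carrying the periods of $\eta_i$ lies in $\nexpl(\eta)$.

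I would first pass to the $\bb{Z}_p$ setting, invoking the first result announced in the abstract: we may assume $\eta = \eta_1 + \cdots + \eta_m$ is a $\bb{Z}_p$-minimal periodic decomposition with each $\eta_i$ defined on a finite alphabet. This places us inside the Kari--Szabados algebraic framework. Choose for each $i$ a period $h_i$ of $\eta_i$ on $\ell_i$, set $\varphi_i = X^{h_i} - 1$, and let $\Phi = \varphi_1 \cdots \varphi_m$. Then $\Phi$ annihilates $\eta$, and since annihilation by a fixed Laurent polynomial is a closed, shift-invariant condition, $\Phi$ also annihilates every configuration in $\overline{Orb(\eta)}$.

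The next step is an argument by contradiction: suppose $\ell_i$ is expansive for $\overline{Orb(\eta)}$ with constant $t_0 > 0$, and aim to show that the shorter Laurent polynomial $\Psi_i = \prod_{j \neq i} \varphi_j$ already annihilates $\eta$. Once this is established, the Kari--Szabados decomposition procedure applied to $\Psi_i$ produces a $\bb{Z}_p$-periodic decomposition of $\eta$ with at most $m-1$ summands, contradicting $\bb{Z}_p$-minimality. To obtain $\Psi_i \eta = 0$, I would examine the auxiliary configuration $\zeta = \Psi_i \eta$. By construction $\varphi_i \zeta = \Phi \eta = 0$, so $\zeta$ is periodic in direction $\ell_i$; if $\zeta$ were nonzero and not fully periodic, then the unique nonexpansive direction of $\overline{Orb(\zeta)}$ would be $\ell_i$ itself, a fact that I would then lift back to $\overline{Orb(\eta)}$ through the bounded-range operator $\Psi_i$ to contradict the assumed expansivity of $\ell_i$ there.

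The main obstacle lies in making the comparison between $\overline{Orb(\eta)}$ and $\overline{Orb(\zeta)}$ rigorous. Because $\Psi_i$ has bounded support, agreement of configurations on a sufficiently wide strip around $\ell_i$ pushes forward to agreement of their $\Psi_i$-images; however, the reverse lifting is not automatic and is exactly what has to be controlled. My plan is to lift pairs of distinct elements of $\overline{Orb(\zeta)}$ agreeing on $\ell_i^{t_0}$ to distinct elements of $\overline{Orb(\eta)}$ agreeing on a slightly wider strip, which contradicts expansivity. To produce such lifts I would use a compactness/pigeonhole argument on shifts $T^{u_n}\eta$ with $u_n$ transverse to $\ell_i$: since each component $\eta_j$ has only one direction of periodicity, its behaviour on any strip of bounded width is controlled by finitely many patterns, and a diagonal extraction yields the required pair. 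The remaining subtlety is in tracking all $m$ components simultaneously and excluding the degenerate case in which $\zeta$ happens to be fully periodic; this is where the ``wide class'' hypothesis of the abstract, and in particular the low convex pattern complexity assumption, should intervene to close the argument.
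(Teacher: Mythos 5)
There is a genuine gap, and it sits exactly where the difficulty of the conjecture lives. First, be aware that the statement you set out to prove is presented in the paper as a \emph{conjecture}: the paper does not prove it in general, only under the additional hypothesis that for every $x \in \overline{Orb\,(\eta)}$ the subshift $\overline{Orb\,(x)}$ contains a periodic configuration (Theorem~\ref{mainthm2}), which covers in particular the low convex pattern complexity case. Your opening moves are sound and match the paper's toolkit: the reduction to a $\bb{Z}_p$-minimal decomposition over a finite alphabet is Theorem~\ref{mainthm1}, and the conversion of ``$\Psi_i\eta=0$'' into a contradiction with minimality via the decomposition procedure (iterating Lemma~\ref{lem_existmulconf}, as in the proof of Proposition~\ref{prop_geom_convexset}) is legitimate. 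The unproved step is the implication ``$\ell_i$ expansive on $\overline{Orb\,(\eta)}$ $\Rightarrow$ $\Psi_i\eta=0$'', which is essentially a restatement of the conjecture itself.

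Concretely, the flaw is in the ``lift back'' step. The map $x \mapsto \Psi_i x$ is a sliding block code from $\overline{Orb\,(\eta)}$ onto $\overline{Orb\,(\zeta)}$ with $\zeta=\Psi_i\eta$, and such factor maps transport agreement on strips \emph{forward} (agreement of $x,x'$ on $\ell_i^{t}$ gives agreement of $\Psi_i x,\Psi_i x'$ on a slightly narrower strip), but nonexpansiveness does not lift \emph{backward}: a pair $z\neq z'$ in $\overline{Orb\,(\zeta)}$ agreeing on a wide strip around $\ell_i$ has preimages in $\overline{Orb\,(\eta)}$ that are distinct but need not agree on any strip, so no contradiction with the assumed expansivity of $\ell_i$ on $\overline{Orb\,(\eta)}$ results. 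Your proposed repair --- a compactness/pigeonhole extraction along shifts $T^{u_n}\eta$ transverse to $\ell_i$ --- is not carried out, and your final sentence concedes that an extra hypothesis ``should intervene,'' which both changes the statement being proved and leaves the mechanism unspecified. For comparison, the paper's actual argument (Theorem~\ref{mainthm_finitealphabet}) is an induction on $m$: it starts from a periodic configuration $\vartheta\in\overline{Orb\,(\eta)}$ supplied by the extra hypothesis, uses Lemma~\ref{lemma_fullyperiodic} to force all but one limit component to be fully periodic, and then re-centres the orbit at the boundary of the maximal region of periodicity to manufacture a limit configuration of strictly smaller $\bb{Z}_p$-order to which the induction hypothesis applies. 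Nothing in your sketch substitutes for that periodic anchor point, so as written the argument does not close.
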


Szabados's conjecture is a very recent open problem in symbolic dynamics and for the best we know there are not many works toward a proof for the conjecture. 

Thanks to Proposition~\ref{prop_expan_period}, we know that Szabados's conjecture holds for any not fully periodic configuration with \(\bb{Z}\)-order \(2\). In \cite{Colle22}, the author showed that Szabados's conjecture holds for a not fully periodic low convex pattern complexity configuration $\eta \in \mathcal{A}^{\bb{Z}^2}$, with $\mathcal{A} \subset \bb{Z}$, such that all non-periodic configurations in \(\overline{Orb \, (\eta)}\) have the same \(\bb{Z}\)-order. As a corollary, the author got that Szabados's conjecture holds for any not fully periodic low convex pattern complexity configuration with \(\bb{Z}\)-order \(3\).

In this paper, as a corollary of our main result we get that Szabados's conjecture holds for any not fully periodic low convex pattern complexity configuration.  

\subsection{Our contributions}

We believe that the difficulty in approaching Szabados's conjecture remains mostly on the fact that the periodic configurations in a periodic decomposition may not be defined on finite alphabets. Our first result allows us to get around this difficulty.

Given \(n \in \bb{N}\), let us denote \([[n]] = \{0,1,2, \ldots, n-1\}\). 

\begin{theorem}\label{mainthm1}
Let $\eta \in \mathcal{A}^{\bb{Z}^2}$\!, with \(\mathcal{A} \subset \bb{Z}_+\), be a configuration. If $\eta = \eta_1+\cdots+\eta_m$ is a \(\bb{Z}\)-mi\-ni\-mal periodic decomposition, then, for some prime number \(p \in \bb{N}\), with \(\mathcal{A} \subset [[p]]\), $\overline{\eta} = \overline{\eta}_1+\cdots+\overline{\eta}_m$ is a \(\bb{Z}_p\)-mini\-mal periodic decomposition, where the bar denotes the congruence modulo \(p\).
\end{theorem}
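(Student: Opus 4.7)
The plan is to choose a prime $p$ large enough relative to the data of $\eta$ and verify that this choice makes the reduction $\bb{Z}_p$-minimal. The key observation is that the finite alphabet hypothesis lets us convert $\bb{Z}_p$ annihilation information back into $\bb{Z}$ annihilation, where the $\bb{Z}$-minimality hypothesis can then be put to work.

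First, I would exhibit a prime $p$ under which the reduction is a valid $\bb{Z}_p$-periodic decomposition of length $m$. Let $N = \max \mathcal{A}$. For each $i$ the configuration $\eta_i$, being periodic and nonzero (by $\bb{Z}$-minimality), has finite nonzero image, and hence its nonzero entries admit a greatest common divisor $d_i \in \bb{N}$. Take any prime $p$ with $p > 2^{m-1}N$ and $p \nmid d_1 d_2 \cdots d_m$. Then $\mathcal{A} \subset [[p]]$, each $\overline{\eta}_i \neq 0$ is $\bb{Z}_p$-periodic with the same period as $\eta_i$, and, by Remark~\ref{rem_vectorminimalperioddecomposit}(i), these periods lie in pairwise distinct directions. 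Hence $\overline{\eta} = \overline{\eta}_1 + \cdots + \overline{\eta}_m$ is a genuine $\bb{Z}_p$-periodic decomposition of length $m$; in particular the $\bb{Z}_p$-order of $\overline{\eta}$ is at most $m$.

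Next, I would argue that the $\bb{Z}_p$-order is exactly $m$, which is the crux of the theorem. Assume toward a contradiction that it equals some $k < m$, and fix a $\bb{Z}_p$-minimal decomposition $\overline{\eta} = \xi_1 + \cdots + \xi_k$ with $\xi_i$ of period $v_i$; by the $\bb{Z}_p$-analogue of Remark~\ref{rem_vectorminimalperioddecomposit}(i), the $v_i$ lie in pairwise distinct directions. Setting $\varphi(X) = \prod_{i=1}^k (X^{v_i}-1) \in \bb{Z}[X^{\pm 1}]$, the relation $\varphi \cdot \overline{\eta} = 0$ in $\bb{Z}_p[[X^{\pm 1}]]$ means that every entry of $\varphi \cdot \eta \in \bb{Z}[[X^{\pm 1}]]$ is divisible by $p$. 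Expanding $\varphi$ gives $2^k$ monomials with coefficients $\pm 1$, with $2^{k-1}$ of each sign, so every entry of $\varphi \cdot \eta$ is a difference of two sums, each of $2^{k-1}$ values taken from $\mathcal{A} \subset [0,N]$, and therefore lies in $[-2^{k-1}N,\, 2^{k-1}N]$. Since $k \leq m-1$ and $p > 2^{m-1}N \geq 2^{k-1}N$, divisibility by $p$ forces vanishing, so $\varphi \cdot \eta = 0$ in $\bb{Z}[[X^{\pm 1}]]$.

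The final step, which I expect to be the main obstacle, is to translate the integer annihilation $\varphi \cdot \eta = 0$ into an actual $\bb{Z}$-periodic decomposition $\eta = \eta'_1 + \cdots + \eta'_k$ with $\eta'_i$ of period $v_i$; this would contradict the $\bb{Z}$-minimality of $\eta$ since $k < m$. Over a field such as $\bb{Z}_p$ or $\bb{Q}$ the decomposition follows from a Chinese Remainder argument using the distinct-direction hypothesis, but over $\bb{Z}$ one must ensure that the resulting components are integer valued. A natural strategy is to lift the $\xi_i$ to $\bb{Z}$-periodic configurations $\tilde{\xi}_i$ with values in $\{0, 1, \ldots, p-1\}$, observe that the error $\delta = \eta - \sum_i \tilde{\xi}_i \in p\,\bb{Z}[[X^{\pm 1}]]$ satisfies $\varphi \cdot \delta = 0$ (since each $\tilde{\xi}_i$ is annihilated by its own factor $X^{v_i}-1$) and, after dividing by $p$, has bounded integer alphabet, and then either iterate the argument on $\delta/p$ or invoke a structural result from the Kari--Szabados framework characterizing integer configurations annihilated by a product of line polynomials as genuine $\bb{Z}$-sums of line-periodic configurations with the prescribed periods.
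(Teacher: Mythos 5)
Your proposal is correct in its essential structure but takes a genuinely different route from the paper's. The paper fixes the periods $h_i$ of the given decomposition, chooses $p$ so large that reduction modulo $p$ does not alter the finite-alphabet configurations $\varphi_i\eta_i$ (suitably shifted to be nonnegative), where $\varphi_i=\prod_{k\neq i}(X^{h_k}-1)$, deduces from Proposition~\ref{prop_geom_convexset} that no $\overline{\eta}_i$ can be fully periodic, and then rules out a shorter $\bb{Z}_p$-decomposition via Lemma~\ref{lemma_fullyperiodic}, which rests on the nonexpansive-direction machinery. You instead take a hypothetical shorter $\bb{Z}_p$-decomposition, note that its annihilator $\varphi=\prod_{i=1}^{k}(X^{v_i}-1)$ has $2^{k-1}$ coefficients of each sign $\pm 1$, so that every entry of $\varphi\eta$ lies in $[-2^{k-1}N,2^{k-1}N]$, and conclude from $p>2^{m-1}N$ that divisibility by $p$ forces $\varphi\eta=0$ over $\bb{Z}$; the contradiction with $\bb{Z}$-minimality then comes from converting this integer annihilation into a $\bb{Z}$-periodic decomposition of length $k<m$. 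Your route is more elementary and self-contained --- it bypasses Lemma~\ref{lemma_fullyperiodic} and Proposition~\ref{prop_geom_convexset} entirely and gives an explicit bound on $p$ --- while the paper's argument yields the extra information that each $\overline{\eta}_i$ fails to be fully periodic.

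Two caveats. First, your assertion that each $\eta_i$ ``has finite nonzero image'' is false in general: the paper stresses after Theorem~\ref{theorKS} that the components of a $\bb{Z}$-periodic decomposition may be defined on infinite alphabets, and this is precisely the difficulty Theorem~\ref{mainthm1} is designed to circumvent. The gcd $d_i$ of the nonzero entries still exists (as the positive generator of the subgroup of $\bb{Z}$ they generate), but in fact the condition $p\nmid d_1\cdots d_m$ is superfluous: once your second step shows the $\bb{Z}_p$-order of $\overline{\eta}$ is at least $m$, minimality of the displayed length-$m$ decomposition follows without needing $\overline{\eta}_i\neq 0$. Second, the final step you flag as the main obstacle is not actually a gap: the structural fact you need --- that a configuration in $R[[X^{\pm 1}]]$ annihilated by $(X^{v_1}-1)\cdots(X^{v_k}-1)$, with $v_1,\ldots,v_k$ in pairwise distinct directions, is a sum of $k$ periodic configurations with periods $v_1,\ldots,v_k$ --- holds over any $R$, including $\bb{Z}$, by induction from Lemma~\ref{lem_existmulconf}, and is exactly the argument carried out in the last paragraph of the paper's proof of Proposition~\ref{prop_geom_convexset}. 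You should invoke that and discard the alternative of iterating on $\delta/p$, which does not obviously terminate.
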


By using Theorem \ref{mainthm1}, we get the following result:

\begin{theorem}\label{mainthm2}
Let $\eta \in \mathcal{A}^{\bb{Z}^2}$, with \(\mathcal{A} \subset \bb{Z}\), be a not fully periodic configuration and suppose \(\eta = \eta_1+\cdots+\eta_m\) is a \(\bb{Z}\)-minimal periodic decomposition. If for any configuration \(x \in \overline{Orb \, (\eta)}\) the subshift \(\overline{Orb \, (x)}\) has a periodic configuration, then, for each $1 \leq i \leq m$, a line $\ell \subset \bb{R}^2$ contains a period for $\eta_i$ if and only if $\ell \in \nexpl(\eta)$.
\end{theorem}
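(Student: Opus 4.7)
The plan is first to reduce the problem to a $\bb{Z}_p$-minimal periodic decomposition via Theorem~\ref{mainthm1}, and then to establish the non-trivial direction by exhibiting, for each summand $\overline{\eta}_i$, a periodic (but not fully periodic) configuration $z_i \in \overline{Orb\,(\overline{\eta})}$ with period direction $\ell_i$. Since nonexpansiveness passes to any larger subshift, this suffices.

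First, I would invoke Theorem~\ref{mainthm1} to obtain a prime $p$ and a $\bb{Z}_p$-minimal periodic decomposition $\overline{\eta}=\overline{\eta}_1+\cdots+\overline{\eta}_m$ on finite alphabets. Because $\mathcal{A}\subset[[p]]$, the subshifts $\overline{Orb\,(\eta)}$ and $\overline{Orb\,(\overline{\eta})}$ coincide, so $\nexpl(\eta)=\nexpl(\overline{\eta})$ and the orbit-closure hypothesis transfers to $\overline{\eta}$. Minimality forbids fully periodic summands (a fully periodic $\overline{\eta}_i$ shares a common period with any $\overline{\eta}_j$ and can be absorbed into it, reducing $m$), so each $\overline{\eta}_i$ has a unique period direction $\ell_i$, and by Remark~\ref{rem_vectorminimalperioddecomposit}(i) the directions $\ell_1,\ldots,\ell_m$ are pairwise distinct. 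The inclusion $\nexpl(\overline{\eta})\subseteq\{\ell_1,\ldots,\ell_m\}$ is Remark~\ref{rem_converSazabados'sconj}; it remains to show each $\ell_i\in\nexpl(\overline{\eta})$.

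The key observation is that if $z\in\overline{Orb\,(\overline{\eta})}$ is periodic but not fully periodic with period direction $\ell$, then Boyle-Lind applied to the single periodic orbit $\overline{Orb\,(z)}$ identifies $\ell$ as its unique nonexpansive direction; since nonexpansive lines on a subshift $Y\subseteq X$ are nonexpansive on $X$, this gives $\ell\in\nexpl(\overline{\eta})$. So the task reduces to exhibiting, for each $1\leq i\leq m$, such a $z_i$ with period direction exactly $\ell_i$. A compactness/diagonal argument on shifts (extracting a subsequence along which each $T^{u_n}\overline{\eta}_j$ converges in the finite-alphabet subshift $\overline{Orb\,(\overline{\eta}_j)}$) shows that every $y\in\overline{Orb\,(\overline{\eta})}$ decomposes as $y=y_1+\cdots+y_m$ with $y_j\in\overline{Orb\,(\overline{\eta}_j)}$ periodic in direction $\ell_j$; the orbit-closure hypothesis then guarantees that every sub-subshift of $\overline{Orb\,(\overline{\eta})}$ contains a periodic configuration.

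The main obstacle is arranging that every direction $\ell_i$ is actually realized by such a periodic $z_i$. My plan is induction on $m$: the case $m=1$ is immediate and $m=2$ is Proposition~\ref{prop_expan_period}. For the inductive step I would pass, for $j\neq i$, to the auxiliary configuration $\overline{\eta}^{(j)}=(X^{h_j}-1)\overline{\eta}$, of $\bb{Z}_p$-order at most $m-1$, whose summands are $(X^{h_j}-1)\overline{\eta}_k$ for $k\neq j$, still periodic in the directions $\{\ell_k:k\neq j\}$. Through the factor map $\pi_j=(X^{h_j}-1)$, which commutes with shifts and sends periodic configurations to periodic configurations, $\overline{\eta}^{(j)}$ inherits the orbit-closure hypothesis; the induction hypothesis furnishes periodic configurations in $\overline{Orb\,(\overline{\eta}^{(j)})}$ realizing each direction $\ell_k$ with $k\neq j$, in particular $\ell_i$. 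The technical crux, and the point where the orbit-closure hypothesis is used most delicately, is to lift such a periodic configuration through $\pi_j$ to a periodic configuration in $\overline{Orb\,(\overline{\eta})}$ with the same direction $\ell_i$; this requires a rigidity argument showing that a periodic configuration inside a length-$m$ sum of configurations with pairwise distinct period directions must itself have its period direction among the $\ell_j$, which then pins the lift's direction to $\ell_i$.
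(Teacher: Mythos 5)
Your overall strategy---reduce to a $\bb{Z}_p$-minimal decomposition via Theorem~\ref{mainthm1}, then realize each direction $\ell_i$ by a periodic but not fully periodic configuration in the orbit closure, arguing by induction on $m$---is the same as the paper's, and the first half of your argument (the reduction, the observation that such a $z_i$ forces $\ell_i\in\nexpl(\eta)$, and the limit decomposition $y=y_1+\cdots+y_m$) is sound. The inductive step, however, has a genuine gap, and it sits exactly where you flag the ``technical crux.'' Two problems. First, passing to $\overline{\eta}^{(j)}=(X^{h_j}-1)\overline{\eta}$ does not guarantee that the direction $\ell_i$ you are after survives: the summand $(X^{h_j}-1)\overline{\eta}_i$ may be fully periodic even though $\overline{\eta}_i$ is not, in which case $\ell_i$ need not occur as a period direction of a \emph{minimal} decomposition of $\overline{\eta}^{(j)}$ (cf.\ Lemma~\ref{lemma_fullyperiodic}\,(iii)), and the induction hypothesis then says nothing about $\ell_i$. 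Second, the lift through $\pi_j$ is not established. Given a periodic $w\in\overline{Orb \, (\overline{\eta}^{(j)})}$ with period direction $\ell_i$ and a preimage $\tilde w\in\overline{Orb \, (\overline{\eta})}$, the orbit-closure hypothesis does yield a periodic $z\in\overline{Orb \, (\tilde w)}$, and $\pi_j(z)$ inherits the period of $z$ and lies in $\overline{Orb \, (w)}$, hence has a period in direction $\ell_i$; but $\pi_j(z)$ may be fully periodic (for instance $z$ could have period exactly $h_j$, making $\pi_j(z)=0$), and then nothing pins the period direction of $z$ to $\ell_i$ rather than $\ell_j$ or some other $\ell_s$. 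The ``rigidity argument'' you invoke only shows that the direction of $z$ is among $\ell_1,\ldots,\ell_m$, which is already Remark~\ref{rem_converSazabados'sconj} and is not enough.

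The paper avoids both problems by never leaving $\overline{Orb \, (\eta)}$. Starting from a periodic $\vartheta=\lim T^{g_n}\eta=\vartheta_1+\cdots+\vartheta_m$, Lemma~\ref{lemma_fullyperiodic} shows that all summands but one, say $\vartheta_{\iota_1}$, are fully periodic; then, for each $k\neq\iota_1$, one translates $T^{g_{n_l}}\eta$ along $u_j$ to the boundary of the maximal strip or half-plane $L_{\ell_k}(l)$ on which $T^{g_{n_l}}\eta_k$ is fully periodic. The resulting limit $\varTheta$ has $\bb{Z}_p$-order at most $m-1$ (the $j$-th summand became fully periodic), while its $k$-th summand provably did \emph{not} become fully periodic---this is the content of the first Claim in the proof of Theorem~\ref{mainthm_finitealphabet} and is precisely the control your factor-map route lacks. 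Hence $\ell_k$ is a period direction of a minimal decomposition of $\varTheta$, and the induction hypothesis gives $\ell_k\in\nexpl(\varTheta)\subseteq\nexpl(\eta)$, a legitimate inclusion because $\overline{Orb \, (\varTheta)}$ is a subsystem of $\overline{Orb \, (\eta)}$ rather than a factor. To salvage your route you would need an analogue of this boundary-sliding argument to control which summands remain non--fully-periodic after reducing the order; without it the induction does not close.
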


As an application of Theorem \ref{mainthm2}, we get the following result:

\begin{corollary}
Let $\eta \in \mathcal{A}^{\bb{Z}^2}$, with \(\mathcal{A} \subset \bb{Z}\), be a not fully periodic low convex pattern\linebreak complexity configuration and suppose \(\eta = \eta_1+\cdots+\eta_m\) is a \(\bb{Z}\)-minimal periodic decomposition. Then, for each $1 \leq i \leq m$, a line $\ell \subset \bb{R}^2$ contains a period for $\eta_i$ if and only if $\ell \in \nexpl(\eta)$.
\end{corollary}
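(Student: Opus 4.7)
The plan is to apply Theorem \ref{mainthm2}. Its conclusion is exactly what the corollary asserts, so the only thing to verify is its hypothesis: for every $x \in \overline{Orb \, (\eta)}$, the subshift $\overline{Orb \, (x)}$ contains a periodic configuration. Since $\mathcal{A}$ is a finite subset of $\bb{Z}$, the ambient space $\mathcal{A}^{\bb{Z}^2}$ is compact, so all the limit arguments below can be carried out inside it.

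First I would show that low convex pattern complexity is hereditary along the orbit closure: if $|\{T^u\eta\sob{\mathcal{S}} : u \in \bb{Z}^2\}| \leq |\mathcal{S}|$ for a finite convex $\mathcal{S}$, then any $x \in \overline{Orb \, (\eta)}$ satisfies the same bound with the same $\mathcal{S}$, because each pattern $T^u x\sob{\mathcal{S}}$ is a pointwise limit of patterns $T^{u_n}\eta\sob{\mathcal{S}}$ and $\mathcal{S}$ is finite. Thus every $x \in \overline{Orb \, (\eta)}$ is itself a low convex pattern complexity configuration and, by Theorem \ref{theorKS}, carries a $\bb{Z}$-periodic decomposition $x = x_1 + \cdots + x_k$.

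It then remains to produce a periodic element of $\overline{Orb \, (x)}$ from such a decomposition. If $x$ is already periodic we are done. Otherwise, Theorem \ref{mainthm1} supplies, for an appropriate prime $p$, a $\bb{Z}_p$-minimal periodic decomposition $\overline{x} = \overline{x}_1 + \cdots + \overline{x}_k$ on the finite alphabet $\bb{Z}_p$, and by Remark \ref{rem_vectorminimalperioddecomposit} the periods $h_i$ of the $\overline{x}_i$ point in pairwise distinct directions. The strategy is then to iterate subsequential shifts: take an accumulation point of $T^{n h_1} x$ in $\mathcal{A}^{\bb{Z}^2}$ (using the finite-alphabet compactness to extract the limit, while the mod-$p$ picture controls the summands), then accumulate shifts of that limit along $h_2$, and so on via a diagonal argument, producing an accumulation point $y \in \overline{Orb \, (x)}$ whose decomposition inherits all the period directions $h_1, \ldots, h_k$. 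Since $k \geq 2$ and the $h_i$ are not all parallel, $y$ should be forced to be fully periodic.

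The main obstacle is the final claim: that iterating the shift-and-limit procedure along the independent period directions of the summands actually forces the limit configuration itself (not just its individual summands) to be periodic. This is where the $\bb{Z}_p$-reduction is most useful, since working modulo a prime lets the summands live in a compact space and lets one arrange the simultaneous limits to respect all the periods at once. A possibly cleaner alternative is to pass to a minimal subsystem $Y \subseteq \overline{Orb \, (x)}$, which inherits low convex pattern complexity, and then argue that low convex pattern complexity forces such a minimal subshift to consist only of fully periodic configurations, reducing the task to a Morse-Hedlund-type dichotomy for low-complexity minimal $\bb{Z}^2$-subshifts.
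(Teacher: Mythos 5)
Your overall strategy coincides with the paper's: the corollary is deduced from Theorem \ref{mainthm2}, and the only thing to check is its hypothesis that every $x \in \overline{Orb \, (\eta)}$ has a periodic configuration in $\overline{Orb \, (x)}$. Your first reduction is fine: low convex pattern complexity passes to every element of the orbit closure because $\mathcal{S}$ is finite, so each such $x$ admits a $\bb{Z}$-periodic decomposition by Theorem \ref{theorKS}. The gap is in the step that actually produces a periodic configuration in $\overline{Orb \, (x)}$. If $x = x_1+\cdots+x_k$ with $x_i$ periodic of period $h_i$, then any accumulation point $y$ of $T^{nh_1}x$ (along a subsequence where all summands converge, which the mod-$p$ reduction makes available) has the form $y = x_1 + y_2 + \cdots + y_k$ where each $y_i$ is again periodic of period $h_i$: shifting along a period direction of one summand and passing to a limit returns a configuration of exactly the same type, namely a sum of $k$ periodic configurations with the same period directions, and iterating over $h_2, \ldots, h_k$ does not improve matters. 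Nothing in this procedure forces the \emph{sum} to become periodic; periodicity of the individual summands was never in question. So the assertion that ``$y$ should be forced to be fully periodic'' is unsupported, and it is precisely the hard part of the statement.

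Your alternative --- pass to a minimal subsystem and invoke a Morse--Hedlund-type dichotomy --- points in the right direction, but in two dimensions this is not a routine dichotomy; it is the substance of the external results the paper relies on. The paper's own proof verifies the hypothesis of Theorem \ref{mainthm2} by citing Theorem 1.14 and Propositions 2.10 and 2.12 of \cite{Colle22} together with results from \cite{van} and \cite{KariSzabados}, which establish exactly that the orbit closure of a low convex pattern complexity configuration (and hence of every configuration in it) contains a periodic configuration. Unless you supply a proof of that fact, or an explicit citation for it, your argument does not close.
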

\begin{proof}
Theorem 1.14 of \cite{Colle22} and standard results from \cite{van,KariSzabados} imply that for any con-\linebreak figuration \(x \in \overline{Orb \, (\eta)}\) the subshift \(\overline{Orb \, (x)}\) has a periodic configuration (see Propo\-si\-tions~2.10 and 2.12 of \cite{Colle22} for detailed statements). Therefore, the corollary follows by Theorem \ref{mainthm2}. 
\end{proof} 

\section{Background}
\label{sec2}
In this section, we will revisit terminologies, known results or immediate applications of known results.

\subsection{One-sided nonexpansive directions}

In the sequel, we will revisit a more refined version of expansiveness called one-sided nonexpansiveness and introduced by Cyr and Kra in \cite{van}. We recall that two vectors are parallel if they have the same direction and antiparallel if they have opposite directions. Two oriented objects in $\bb{R}^2$ are said to be \emph{(anti)parallel} if the adjacent vectors to their respective orientations are (anti)parallel, where by object we mean an oriented line, an oriented line segment or a vector. In a slight abuse of notation, we will view an oriented line also as a subset of $\bb{R}^2$.

Given a line $\ell \subset \bb{R}^2$, we use $\pmb{\ell}$ to denote the line $\ell$ endowed of a given o\-ri\-en\-ta\-tion. Let $\pmb{\ell} \subset \bb{R}^2$ be an oriented line. The half plane determined by $\pmb{\ell}$ is defined as \[\mathcal{H}(\pmb{\ell}) := \{(g_1,g_2) \in \bb{Z}^2 : u_1g_2-u_2g_1 \geq 0\},\] where $(u_1,u_2) \in \bb{R}^2$ is a non-zero vector parallel to $\pmb{\ell}$. Let $\eta \in \mathcal{A}^{\bb{Z}^2}$ be a configuration. An oriented line $\pmb{\ell} \subset \bb{R}^2$ through the origin is called a \emph{one-sided expansive direction on \(\overline{Orb \, (\eta)}\)} if \[\forall \ x,y \in \overline{Orb \, (\eta)}, \quad x\sob{\mathcal{H}(\pmb{\ell})} = y\sob{\mathcal{H}(\pmb{\ell})} \ \Longrightarrow \ x = y.\] Otherwise, $\pmb{\ell}$ is called a \emph{one-sided non\-ex\-pan\-si\-ve direction on \(\overline{Orb \, (\eta)}\)}. 

For a configuration $\eta \in \mathcal{A}^{\bb{Z}^2}$, we use $\nexpd(\eta)$ to denote the set formed by the one-sided non\-ex\-pan\-si\-ve di\-rec\-tions on $\overline{Orb \, (\eta)}$. By the compactness of $\overline{Orb \, (\eta)}$, a line $\ell \in \nexpl(\eta)$ if and only if, for some orientation, $\pmb{\ell} \in \nexpd(\eta)$ (see \cite{van} for more details). As highlighted in \cite{van}, lines in $\nexpl(\eta)$ or oriented lines in \(\nexpd(\eta)\) are rational, where by \emph{rational} we mean a line or an oriented line in $\bb{R}^2$ with a rational slope. 

\subsection{Geometric notations}

Let $\pmb{\ell} \subset \bb{R}^2$ be an oriented line and suppose $\mathcal{S} \subset \bb{Z}^2$ is a non-empty, convex set such that $\mathcal{S}+u \subset \mathcal{H}(\pmb{\ell})$ for some $u \in \bb{Z}^2$. The \emph{support line of $\mathcal{S}$} determined by $\pmb{\ell}$, denoted by $\pmb{\ell}_{\mathcal{S}}$, is defined as the oriented line $\pmb{\ell}'$ parallel to $\pmb{\ell}$ such that $\mathcal{S} \subset \mathcal{H}(\pmb{\ell}')$ and $\pmb{\ell}' \cap \mathcal{S} \neq \emptyset$. Of course, if $\mathcal{S} \subset \bb{Z}^2$ is a non-empty, finite, convex set, then $\pmb{\ell}_{\mathcal{S}}$ is defined. 

Let $\mathcal{S} \subset \bb{Z}^2$ be a non-empty, convex set. If \(\mathcal{S}\) has at least two points and \(|\mathcal{S} \cap \pmb{\ell}'_{\mathcal{S}}| = 1\) holds for some oriented line $\pmb{\ell}' \subset \bb{R}^2$, we call \(\mathcal{S} \cap \pmb{\ell}'_{\mathcal{S}}\) a \emph{vertex} of \(\mathcal{S}\). If $\conv(\mathcal{S})$ has positive area and \(|\mathcal{S} \cap \pmb{\ell}'_{\mathcal{S}}| > 1\) holds for some oriented line $\pmb{\ell}' \subset \bb{R}^2$, we call \(\conv(\mathcal{S} \cap \pmb{\ell}'_{\mathcal{S}})\) an \emph{edge} of \(\mathcal{S}\). We use $E(\mathcal{S})$ to denote the set of edges of $\mathcal{S}$.

If $\mathcal{S} \subset \bb{Z}^2$ is a convex set (possibly infinite) such that $\conv(\mathcal{S})$ has positive area, our standard convention is that the boundary of $\conv(\mathcal{S})$ is positively oriented. With this convention, every edge $w \in E(\mathcal{S})$ inherits a natural orientation from the boundary of $\conv(\mathcal{S})$. Moreover, if \(w = \conv(\mathcal{S} \cap \pmb{\ell}'_{\mathcal{S}})\), then \(w\) and \(\pmb{\ell}'\) are parallels.


\subsection{Generating sets}
The notion of generating set was developed in \cite{van} and studied in a more general setting on \cite{frankskra}. 

\begin{definition}
Let $\eta \in \mathcal{A}^{\bb{Z}^2}$ be a configuration and suppose $\mathcal{S} \subset \bb{Z}^2$ is a finite set. A point $g \in \mathcal{S}$ is said to be \emph{$\eta$-generated by $\mathcal{S}$} if $P_{\eta}(\mathcal{S}) = P_{\eta}(\mathcal{S} \backslash \{g\})$. A non-empty, finite, convex subset of $\bb{Z}^2$ for which each vertex is $\eta$-generated is called an \emph{$\eta$-generating set}.
\end{definition}

A point $u_0 \in \mathcal{S}$ is $\eta$-generated by $\mathcal{S}$ if and only if, for each \(g \in \bb{Z}^2\), $\eta_{g-u}$ for $u \in \mathcal{S}$, with $u \neq u_0$, determines $\eta_{g-u_0}$.

Let $\mathcal{S} \subset \bb{Z}^2$ be an \(\eta\)-generating set. Only oriented lines parallels to some edge of \(\mathcal{S}\) may be one-sided nonexpansive directions on $\overline{Orb \, (\eta)}$:

\begin{lemma}[Cyr and Kra \cite{van}]\label{lem_genset_noedge_expas}
Let $\eta \in \mathcal{A}^{\bb{Z}^2}$\! and suppose $\pmb{\ell} \subset \bb{R}^2$ is an oriented line through the ori\-gin and $\mathcal{S} \subset \bb{Z}^2$ is a finite, convex set such that $\mathcal{S} \cap \pmb{\ell}_{\mathcal{S}} = \{g_0\}$ is $\eta$-generated by $\mathcal{S}$. Then $\pmb{\ell} \not\in \nexpd(\eta)$.
\end{lemma}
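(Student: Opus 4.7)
The plan is to use the hypothesis as a local rule that expresses $x_q$ in terms of values of $x$ strictly above $q$ in an appropriate linear height function, and then propagate the assumed agreement on $\mathcal{H}(\pmb{\ell})$ downward by induction on the distance to $\pmb{\ell}$. Because every one-sided nonexpansive direction is rational, I may assume $\pmb{\ell}$ is rational; pick coprime $(u_1,u_2) \in \bb{Z}^2$ parallel to $\pmb{\ell}$ and set $h(g) := u_1 g_2 - u_2 g_1$, so that $h(\bb{Z}^2) = \bb{Z}$ and $\mathcal{H}(\pmb{\ell}) = \{g : h(g) \geq 0\}$. Let $\mathcal{S}' := \mathcal{S} - g_0$, so $0 \in \mathcal{S}'$, and note that the hypothesis $\mathcal{S} \cap \pmb{\ell}_{\mathcal{S}} = \{g_0\}$ translates to $\mathcal{S}' \setminus \{0\} \subset \{h > 0\}$. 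Put $\epsilon := \min\{h(v) : v \in \mathcal{S}' \setminus \{0\}\}$, an integer $\geq 1$.

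Next I extract the local rule. The generation of $g_0$ means the restriction map $P_\eta(\mathcal{S}) \to P_\eta(\mathcal{S} \setminus \{g_0\})$ is a bijection, so there is a function $F$ sending each restricted pattern to the forced value at the $g_0$-slot. Unwinding this in terms of positions of $\eta$, for every $q \in \bb{Z}^2$ one has $\eta_q = F\bigl((\eta_{q+v})_{v \in \mathcal{S}' \setminus \{0\}}\bigr)$. Since $P_\eta(\mathcal{S})$ is finite, every $\mathcal{S}$-pattern occurring in any $x \in \overline{Orb(\eta)}$ already occurs in $\eta$, so the same $F$ applies to every such $x$; the crucial geometric point is that $h(q+v) \geq h(q) + \epsilon$ for each $v \in \mathcal{S}' \setminus \{0\}$.

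Given $x, y \in \overline{Orb(\eta)}$ with $x|_{\mathcal{H}(\pmb{\ell})} = y|_{\mathcal{H}(\pmb{\ell})}$, I prove by induction on $k \geq 0$ that $x$ and $y$ agree on $\{h \geq -k\}$. The case $k=0$ is the hypothesis; for the inductive step take $q$ with $h(q) = -(k+1)$ and observe that $h(q+v) \geq -(k+1)+\epsilon \geq -k$ for every $v \in \mathcal{S}' \setminus \{0\}$, so every $q+v$ lies in the region where $x$ and $y$ already agree. Applying $F$ gives $x_q = y_q$. Letting $k \to \infty$ yields $x = y$, so $\pmb{\ell} \notin \nexpd(\eta)$.

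The delicate step is translating the abstract condition ``$g_0$ is $\eta$-generated by $\mathcal{S}$'' into a concrete local rule whose determining positions all sit strictly on the positive side of $q$ in the $h$-direction, and checking that this rule passes from $\eta$ itself to every configuration in $\overline{Orb(\eta)}$; once that rule is in hand, the inclusion $\mathcal{S}' \setminus \{0\} \subset \{h > 0\}$ together with the induction on $h$-levels with step $\epsilon \geq 1$ finish the argument.
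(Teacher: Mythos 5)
The paper states this lemma purely as a citation to Cyr and Kra and provides no proof of its own, so there is no in-paper argument to compare against; your proof is correct and is essentially the standard argument behind the cited result: the generating condition at the vertex $g_0$ yields a local rule expressing $x_q$ through values at strictly larger levels of the linear functional $h$ defining $\mathcal{H}(\pmb{\ell})$, the rule transfers to every $x \in \overline{Orb\,(\eta)}$ because the patterns of the orbit closure on a finite shape coincide with those of $\eta$, and induction on $h$-levels propagates agreement off the half-plane. Two trivial points worth noting: if $\mathcal{S}=\{g_0\}$ then $\mathcal{S}'\setminus\{0\}$ is empty (but then $\eta$ is constant and the conclusion is immediate), and the reduction to rational $\pmb{\ell}$ is legitimate but unnecessary, since the same induction runs with real step $\epsilon>0$ on the levels of $h$.
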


Let $\varphi(X) = \sum_{i} a_iX^{u_i}$, with $a_i \in R$ and $u_i \in \bb{Z}^2$, be a Laurent polynomial. The \emph{support} of $\varphi$ is defined as $$\supp(\varphi) := \{u_i \in \bb{Z}^2 : a_i \neq 0\}.$$ The \emph{reflected convex support of $\varphi$} is defined as $\mathcal{S}_{\varphi} := \conv(-\supp(\varphi)) \cap \bb{Z}^2$. 

\begin{remark}\label{rem_edge_convexsupport}
For $\varphi(X) = (X^{h_1}-1) \cdots (X^{h_m}-1)$, with \(h_1, \ldots, h_m \in (\bb{Z}^2)^*\), it is easy to see that \[\supp(\varphi) = \{(0,0)\} \cup \{h_{i_1}+\cdots+h_{i_r} : 1 \leq i_1 < \cdots < i_r \leq m, \ 1 \leq r \leq m\}.\] Suppose that \(\conv(\mathcal{S}_{\varphi})\) has positive area. Then every edge of \(\mathcal{S}_{\varphi}\) is either parallel or antiparallel to some vector \(h_i\), with \(1 \leq i \leq m\). If we suppose in addition that the vectors \(h_1, \ldots, h_m\) are in pairwise distinct directions, then, for each \(1 \leq i \leq m\), there exist edges \(-w_i,w_i \in E(\mathcal{S}_{\varphi})\) such that \(-w_i\) is parallel to \(-h_i\) and \(w_i\) is parallel to \(h_i\).  
\end{remark}	

\begin{lemma}[Szabados \cite{Szabados}]\label{lemma_supportgenerating}
Let $\eta \in \mathcal{A}^{\bb{Z}^2}$\!, with \(\mathcal{A} \subset R\), be a configuration. If $\varphi \in \ann_R(\eta)$, then $\mathcal{S}_{\varphi}$ is an $\eta$-generating set.	
\end{lemma}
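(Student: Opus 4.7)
The plan is to translate the algebraic identity $\varphi \eta = 0$ into a local linear constraint on every pattern of $\eta$ whose shape is $\mathcal{S}_\varphi$, and then exploit the fact that a vertex of $\mathcal{S}_\varphi$ must actually lie in $-\supp(\varphi)$, where the coefficient of $\varphi$ is nonzero. Writing $\varphi(X) = a_1 X^{u_1} + \cdots + a_n X^{u_n}$ with the $u_j$ distinct and all $a_j \in R \setminus \{0\}$, the definition in (\ref{defproduct}) combined with $\varphi\eta = 0$ yields, for every $g \in \bb{Z}^2$,
\[
a_1 \eta_{g - u_1} + \cdots + a_n \eta_{g - u_n} = 0.
\]
Substituting $g = -w$ and using $(T^w\eta)_y = \eta_{y-w}$ rewrites this as
\[
a_1 (T^w\eta)_{-u_1} + \cdots + a_n (T^w\eta)_{-u_n} = 0 \quad \forall\, w \in \bb{Z}^2,
\]
so the values of every shift of $\eta$ on the finite set $-\supp(\varphi) \subseteq \mathcal{S}_\varphi$ satisfy this single $R$-linear relation.

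Next I would verify the convex-geometric step. Since $\mathcal{S}_\varphi = \conv(-\supp(\varphi)) \cap \bb{Z}^2$ is clearly non-empty, finite, and convex, only the vertex condition needs proof. The extreme points of $\conv(\mathcal{S}_\varphi)$ coincide with the extreme points of $\conv(-\supp(\varphi))$, and the extreme points of the convex hull of a finite set lie in that set itself; hence every vertex of $\mathcal{S}_\varphi$ in the sense defined right after the definition of $E(\mathcal{S})$ has the form $v = -u_{i_0}$ for some index $i_0$.

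Fixing such a vertex $v = -u_{i_0}$, I would solve the linear relation above for the distinguished term,
\[
a_{i_0} (T^w\eta)_v = -\sum_{j \neq i_0} a_j (T^w\eta)_{-u_j}.
\]
Every point $-u_j$ with $j \neq i_0$ lies in $-\supp(\varphi) \subseteq \mathcal{S}_\varphi$ and is distinct from $v$, so the right-hand side depends only on $T^w\eta\sob{\mathcal{S}_\varphi \setminus \{v\}}$. Because $R$ is taken to be $\bb{Z}$ or a finite field, it is an integral domain and the non-zero coefficient $a_{i_0}$ is not a zero divisor, so $(T^w\eta)_v$ is uniquely recovered from the pattern on $\mathcal{S}_\varphi \setminus \{v\}$. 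Equivalently, $P_\eta(\mathcal{S}_\varphi) = P_\eta(\mathcal{S}_\varphi \setminus \{v\})$, which is the $\eta$-generation condition at $v$; since $w$ was arbitrary, this works for every vertex.

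The only step that is not completely mechanical is identifying vertices of $\mathcal{S}_\varphi$ with elements of $-\supp(\varphi)$, and that is a routine fact about extreme points of convex hulls of finite sets. The potential concern of invertibility of $a_{i_0}$ (for instance when $R = \bb{Z}$) is the main point to handle, but it disappears once we invoke that $R$ is a domain and therefore cancellation by $a_{i_0}$ is legitimate inside $R$.
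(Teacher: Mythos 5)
Your proof is correct and follows essentially the same route as the paper: translate $\varphi\eta=0$ into the linear relation $a_1\eta_{g-u_1}+\cdots+a_n\eta_{g-u_n}=0$ and solve for the term sitting at a vertex. You are in fact more careful than the paper's one-line argument, since you explicitly check that vertices of $\mathcal{S}_{\varphi}$ lie in $-\supp(\varphi)$ and that cancellation by the nonzero coefficient $a_{i_0}$ is valid because $R$ is an integral domain.
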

\begin{proof}
Let $\varphi(X) = \sum_{i} a_iX^{u_i}$, with $a_i \in R$ and $u_i \in \bb{Z}^2$. Given $g \in \bb{Z}^2$ and $u_0 \in \supp(\varphi)$, the values $\eta_{g-u}$ for $u \in \supp(\varphi)$, with $u \neq u_0$, determines $\eta_{g-u_0}$ by applying equation \(a_1\eta_{g-u_1}+\cdots+a_n\eta_{g-u_n} = 0\). This means that $-u_0 \in -\supp(\varphi)$ is $\eta$-generated by $-\supp(\varphi)$.  
\end{proof}

As an immediate application of the previous two lemmas, we have the following remark:

\begin{remark}[Szabados \cite{Szabados}]\label{rem_edges_and_one-sided_direc}
Let $\eta \in \mathcal{A}^{\bb{Z}^2}$, with \(\mathcal{A} \subset R\), be a configuration. Sup\-pose that $\varphi(X) = (X^{h_1}-1) \cdots (X^{h_m}-1)\in \ann_R(\eta)$, with \(h_1, \ldots, h_m \in (\bb{Z}^2)^*\), and that \(\conv(\mathcal{S}_{\varphi})\) has positive area. In particular, \(\mathcal{S}_{\varphi}\) is an \(\eta\)-generating set. If $\pmb{\ell} \in \nexpd(\eta)$, then, thanks to Lem\-ma~\ref{lem_genset_noedge_expas}, \(\pmb{\ell}\) is parallel to some edge of \(\mathcal{S}_{\varphi}\). Hence, according to Re\-mark~\ref{rem_edge_convexsupport}, \(\pmb{\ell}\) is either parallel or an\-ti\-par\-al\-lel to some vector \(h_i\), with \(1 \leq i \leq m\).  
\end{remark}

The previous remark allows us to show that the conversely of Szabados's conjecture holds:

\begin{remark}\label{rem_converSazabados'sconj}
If $\eta = \eta_1+\cdots+\eta_m$ is a $R$-mi\-nimal periodic decomposition and $\ell \in \nexpl(\eta)$, then \(\ell\) contains a period of some $\eta_i$, with $1 \leq i \leq m$. Indeed, if \(m = 1\), then there is nothing to argue. Otherwise, let $h_i \in \bb{Z}^2$ be a period for $\eta_i$, with \(1 \leq i \leq m\), and consider the Laurent polynomial $\varphi(X) = (X^{h_1}-1) \cdots (X^{h_m}-1)$. Since the vectors \(h_1, \ldots, h_m\) are in pairwise distinct directions, then \(\conv(\mathcal{S}_{\varphi})\) has positive area. Since \(\varphi \in \ann_{R}(\eta)\) and, for some orientation, \(\pmb{\ell} \in \nexpd(\eta)\), by the previous remark it follows that \(\pmb{\ell}\) is either parallel or an\-ti\-par\-al\-lel to some vector \(h_i\), with \(1 \leq i \leq m\), which means that \(\ell\) contains a period of some $\eta_i$, with $1 \leq i \leq m$.   
\end{remark}

The next result is a rewrite of a particular case of \cite[Lemma 5.10]{frankskra}, a well-known result.

\begin{proposition}[Franks and Kra \cite{frankskra}]\label{prop_expan_period}
Let $\eta \in \mathcal{A}^{\bb{Z}^2}$, with \(\mathcal{A} \subset R\), be a configur- ation. Suppose \(\varphi(X) = (X^{h_1}-1)(X^{h_2}-1) \in \ann_R(\eta)\) and \(h_1,h_2 \in \bb{Z}^2\) are vectors in distinct directions. If \(\pmb{\ell} \subset \bb{R}^2\) is an oriented line through the origin parallel to an edge of \(\mathcal{S}_{\mathcal{\varphi}}\) and \(\pmb{\ell} \not\in \nexpd(\eta)\), then \(\eta\) is periodic.
\end{proposition}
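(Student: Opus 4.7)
The plan is to reduce the proposition to a classical one-dimensional result. By Remark~\ref{rem_edge_convexsupport}, $\pmb{\ell}$ is (anti)parallel to either $h_1$ or $h_2$; after relabelling I assume $\pmb{\ell} \parallel h_1$. Choose integer coordinates so that $h_1$ lies along the positive first axis and $\mathcal{H}(\pmb{\ell}) = \{(i,j) \in \bb{Z}^2 : j \geq 0\}$, working on each coset of $\bb{Z} h_1 + \bb{Z} h_2$ separately if $h_1, h_2$ do not span $\bb{Z}^2$. Expanding $\varphi \eta = 0$ as $\eta_{(i,j)} - \eta_{(i-1,j)} - \eta_{(i,j-1)} + \eta_{(i-1,j-1)} = 0$ shows that the horizontal differences $\alpha(i) := \eta_{(i,j)} - \eta_{(i-1,j)}$ are independent of $j$ and the vertical differences $\beta(j) := \eta_{(i,j)} - \eta_{(i,j-1)}$ are independent of $i$. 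Thus $\eta$ is completely determined by the value $\eta_{(0,0)}$ together with the two one-dimensional sequences $\alpha$ and $\beta$.

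Every $\eta' \in \overline{Orb(\eta)}$ likewise satisfies $\varphi \eta' = 0$ (since $\varphi$ commutes with shifts and the annihilation survives limits), and so admits its own triple $(c', \alpha', \beta')$ with $\alpha' \in \overline{Orb(\alpha)}$, $\beta' \in \overline{Orb(\beta)}$. Knowing $\eta'|_{\mathcal{H}(\pmb{\ell})}$ pins down $\alpha'$ completely (from row-differences at any fixed $j \geq 0$) and $\beta'|_{\{j \geq 1\}}$ (from column-differences in the upper half-plane), but leaves $\beta'|_{\{j \leq 0\}}$ unconstrained. The hypothesis $\pmb{\ell} \notin \nexpd(\eta)$ therefore translates precisely into the statement that the one-dimensional subshift $\overline{Orb(\beta)}$ is \emph{right-determined}: any two of its elements agreeing on $\{j \geq 1\}$ must coincide.

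I then invoke the classical fact that a right-determined (equivalently, positively expansive) one-dimensional subshift is finite and hence consists only of periodic configurations. Thus $\beta$ is periodic, say with period $q$, giving $\eta_{(i,j+q)} - \eta_{(i,j)} = \sum_{l=1}^{q}\beta(l)$, a constant $C$ independent of $(i,j)$. Finiteness of $\mathcal{A}$ forces $C$ to have finite additive order in $R$ (order $1$ when $R = \bb{Z}$, order dividing $p$ when $R = \bb{Z}_p$), so $\eta$ acquires the period $(0, q \cdot \mathrm{ord}(C))$ and is in particular periodic. The main obstacle will be verifying the correspondence in the second paragraph: one must check carefully that the ambiguities in $c'$ and $\alpha'$ do not spoil the translation from one-sided expansiveness of $\pmb{\ell}$ on $\overline{Orb(\eta)}$ to right-determinedness of the one-dimensional subshift $\overline{Orb(\beta)}$.
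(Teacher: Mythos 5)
The paper does not prove this proposition itself (it cites Lemma~5.10 of Franks--Kra), so your argument has to stand on its own. Your setup is sound: the relation $\eta_{(i,j)}-\eta_{(i-1,j)}-\eta_{(i,j-1)}+\eta_{(i-1,j-1)}=0$ does yield $\eta_{(i,j)}=c+A(i)+B(j)$ with $A,B$ taking finitely many values, the reduction of periodicity of $\eta$ to periodicity of $\beta$ is fine, and the fact that a right-determined one-dimensional subshift over a finite alphabet has eventually constant complexity (each long word has a unique left extension) and is therefore finite is correct. The problem is exactly the step you flagged and then did not carry out: the passage from ``$\pmb{\ell}\not\in\nexpd(\eta)$'' to ``$\overline{Orb \, (\beta)}$ is right-determined.'' Two configurations $x,y\in\overline{Orb \, (\eta)}$ agree on $\mathcal{H}(\pmb{\ell})$ if and only if they have the same constant $c$, the same $\alpha$, \emph{and} $\beta$-components agreeing on $\{j\geq 1\}$. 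So to contradict one-sided expansiveness from a pair $\beta_1\neq\beta_2\in\overline{Orb \, (\beta)}$ with the same future, you must realize both inside $\overline{Orb \, (\eta)}$ with matching $c$ and $\alpha$. Taking purely vertical shifts $T^{(0,b_n)}\eta\to x$ matches the $\alpha$-components, but the constant of the limit is $c=\lim(e+B(-b_n))$, which is coupled to the approximating sequence $b_n$; there is no reason the two achievable constant-sets for $\beta_1$ and for $\beta_2$ intersect, and the two limits then differ by a nonzero constant on the whole upper half-plane. This is not a removable technicality of your framework: a posteriori the implication is true, but the only way I see to prove it is to first prove the proposition. (The coset reduction when $h_1,h_2$ do not generate $\bb{Z}^2$ has a similar unaddressed interaction with expansiveness, but that is secondary.)

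The standard repair keeps your coordinates but never leaves $\overline{Orb \, (\eta)}$. By compactness, $\pmb{\ell}\not\in\nexpd(\eta)$ gives a finite window: there exist $M,N$ such that $x\sob{[-M,M]\times[0,N]}$ determines $x_{(0,-1)}$ for every $x\in\overline{Orb \, (\eta)}$. The restriction of $\eta$ to the strip $\bb{Z}\times[b,b+N]$ is determined by the tuple $\bigl(B(b),\dots,B(b+N)\bigr)$, which ranges over a finite set; if two tuples at $b<b'$ coincide, the window applied at every $(a,b)$ and $(a,b')$ forces $B(b-1)=B(b'-1)$ (one may assume $A$ is non-constant, else $\eta$ is already periodic), hence $\tau(b-1)=\tau(b'-1)$, and downward induction makes $B$ periodic on $(-\infty,b]$ with period $q=b'-b$ bounded by the number of tuples. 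Since such a repetition occurs in every window of $|\mathcal{A}|^{N+1}+1$ consecutive values of $b$, and all the resulting periods divide $P=(|\mathcal{A}|^{N+1})!$, the sequence $B$ is $P$-periodic on all of $\bb{Z}$, so $\eta$ has period $(0,P)$. I recommend you rewrite the second half of your proof along these lines rather than through the abstract subshift $\overline{Orb \, (\beta)}$.
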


As an immediate application of Proposition \ref{prop_expan_period}, we get that Szabados's conjecture holds for not fully periodic configurations with \(\bb{Z}\)-order \(2\).

To conclude this section, we will state Proposition \ref{prop_geom_convexset}, which will be essential in both cases \(R = \bb{Z}\) and \(R = \bb{Z}_p\). The case where \(R = \bb{Z}\) is an immediate consequence of Corol\-lary 24 of \cite{KariSzabados}. To provide a proof for the case where \(R = \bb{Z}_p\) we need first to prove the following lemma:

\begin{lemma}[Kari and Szabados \cite{kari}]\label{lem_existmulconf}
Let $\theta \in R[[X^{\pm 1}]]$ be a periodic configuration of period $u \in \bb{Z}^2$. Then, for a vector $v \in \bb{Z}^2$ linearly independent to $u$ over $\bb{R}^2$, there exists a periodic configuration $\vartheta \in R[[X^{\pm 1}]]$ of period $u$ such that $(X^v-1)\vartheta = \theta$.  
\end{lemma}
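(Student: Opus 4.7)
The plan is to use the $u$-periodicity of $\theta$ to reduce the construction of $\vartheta$ to a one-dimensional recursion along each $v$-orbit. Unwinding the definition (\ref{defproduct}), the equation $(X^v-1)\vartheta = \theta$ reads $\vartheta_{g-v} - \vartheta_g = \theta_g$ for every $g \in \bb{Z}^2$, equivalently $\vartheta_{g+v} = \vartheta_g - \theta_{g+v}$. Because $u$ and $v$ are linearly independent over $\bb{R}^2$, the subgroup $\bb{Z}u + \bb{Z}v \subset \bb{Z}^2$ has finite index, so $\bb{Z}^2$ decomposes as a finite disjoint union of cosets $\alpha + \bb{Z}u + \bb{Z}v$. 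I would fix a single representative $\alpha$ per coset and parametrize the elements of the coset as $\alpha + iv + ju$ with $(i,j) \in \bb{Z}^2$.

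On each such coset the $u$-periodicity of $\theta$ implies that $\theta_{\alpha + iv + ju}$ depends only on $i$; call this value $a_i^{\alpha}$. I would look for $\vartheta$ also $u$-periodic, so that $\vartheta_{\alpha + iv + ju}$ depends only on $i$; call this value $b_i^{\alpha}$. With this ansatz, the relation $\vartheta_{g+v} = \vartheta_g - \theta_{g+v}$ reduces on each coset to the one-variable recursion $b_{i+1}^{\alpha} = b_i^{\alpha} - a_{i+1}^{\alpha}$. Choosing $b_0^{\alpha} \in R$ arbitrarily (say $b_0^{\alpha} = 0$) and iterating forward and backward produces $b_i^{\alpha}$ for every $i \in \bb{Z}$. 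Setting $\vartheta_{\alpha + iv + ju} := b_i^{\alpha}$ on each coset defines a configuration $\vartheta \in R[[X^{\pm 1}]]$ that is $u$-periodic by construction, and the identity $(X^v-1)\vartheta = \theta$ is then immediate from the recursion combined with the $u$-periodicity of both $\theta$ and $\vartheta$.

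I do not expect a real obstacle in this argument: the linear independence of $u$ and $v$ is precisely what guarantees that the quotient $\bb{Z}^2/(\bb{Z}u + \bb{Z}v)$ is finite and that the induced translation by $v$ on each coset has trivial stabilizer, reducing the construction to a one-dimensional recursion over $\bb{Z}$ with no consistency condition to check. The only point that needs a brief verification is that the resulting $\vartheta$ satisfies the defining equation at every point of the coset (not merely at the chosen representative orbit $\alpha + iv$), but this follows at once from the $u$-periodicity, since translating by $u$ preserves both sides of the equation.
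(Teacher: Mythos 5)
Your proposal is correct and is essentially the paper's own argument: both reduce the equation $\vartheta_{g-v}-\vartheta_g=\theta_g$ to a one-dimensional telescoping recursion along the $v$-direction with initial value $0$ on a set of representatives, and use the $u$-periodicity of $\theta$ to extend $\vartheta$ consistently to all of $\bb{Z}^2$. The only difference is cosmetic: you organise the representatives as cosets of the finite-index subgroup $\bb{Z}u+\bb{Z}v$, while the paper describes the same fundamental domain geometrically via lattice points on the lines parallel to $v$ lying in the strip between $\pmb{\ell}_0$ and $\pmb{\ell}_0+u$.
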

\begin{proof}
Recall that $(X^v-1)\vartheta = \theta$ means that $\vartheta_{g-v}-\vartheta_{g} = \theta_g$ for all $g \in \bb{Z}^2$. Let $\pmb{\ell}_0 \subset \bb{R}^2$ be an oriented line through the origin parallel to the vector $v$. For $g_0 \in \pmb{\ell}_0 \cap \bb{Z}^2$, let $g_1,\ldots,g_l \in \pmb{\ell}_0 \cap \bb{Z}^2$ be the points of $\bb{Z}^2$ between $g_0$ and $g_0+v$, but different from \(g_0\) and $g_0+v$. For each $0 \leq i \leq l$, we define $\vartheta_{g_i+tv} := \sum_{k=1}^{|t|} \theta_{g_i-(k-1)v}$ for $t < 0$, $\vartheta_{g_i} := 0$ and $\vartheta_{g_i+tv} := -\sum_{k=1}^t \theta_{g_i+kv}$ for $t > 0$. 

Let \(\pmb{\ell}_1, \ldots, \pmb{\ell}_n \subset \bb{R}^2\) be the oriented lines parallels to $v$ between $\pmb{\ell}_0$ and $\pmb{\ell}_0+u$, but different from \(\pmb{\ell}_0\) and $\pmb{\ell}_0+u$, that contain a point of \(\bb{Z}^2\). Proceeding as before, we define \(\vartheta_g\) for all \(g \in S := (\pmb{\ell}_0 \cup \pmb{\ell}_1 \cup \cdots \cup \pmb{\ell}_n) \cap \bb{Z}^2\). For any \(g \in \bb{Z}^2 \backslash S\), we define \(\vartheta_g = \vartheta_{g+tu}\), where  \(t \in \bb{Z}\) is the unique integer such that \(g+tu \in S\). By construction, $\vartheta$ is periodic of period \(u\) and, since $\theta$ is periodic of period $u$, $\vartheta$ satisfies all required conditions.  
\end{proof}  

\begin{proposition}[Kari and Szabados \cite{KariSzabados}]\label{prop_geom_convexset}
Let $\eta \in \mathcal{A}^{\bb{Z}^2}$, with $\mathcal{A} \subset R$, be a non-\linebreak periodic configuration. Suppose $\eta = \eta_1+\cdots+\eta_m$ is a $R$-minimal periodic decomposition, $h_i \in \bb{Z}^2$ is a period for $\eta_i$, with \(1 \leq i \leq m\), and $\varphi(X) = (X^{h_1}-1) \cdots (X^{h_{m}}-1)$. If $\psi \in \ann_{R}(\eta)$, then, for every edge $w \in E(\mathcal{S}_{\varphi})$, there exists an edge $w' \in E(\mathcal{S}_{\psi})$ parallel to $w$.
\end{proposition}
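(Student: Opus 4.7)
\medbreak\noindent\textit{Proof plan.} The case $R = \bb{Z}$ is immediate from Corollary 24 of \cite{KariSzabados}, so my plan is to argue the case $R = \bb{Z}_p$ by contradiction. By Remark~\ref{rem_vectorminimalperioddecomposit}, $m \geq 2$ since $\eta$ is non-periodic, and by Remark~\ref{rem_edge_convexsupport} every edge of $\mathcal{S}_\varphi$ is parallel to some $h_i$, so I may assume $w$ is parallel to $h_1$ and suppose toward contradiction that no edge of $\mathcal{S}_\psi$ is parallel to $h_1$. A preliminary observation is that no $\eta_i$ is fully periodic: a rank-$2$ period lattice has finite index in $\bb{Z}^2$ and hence contains nonzero integer multiples of every primitive integer vector — in particular of $h_j$ for $j \neq i$ — contradicting Remark~\ref{rem_vectorminimalperioddecomposit}(i). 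Consequently, the period lattice of $\eta_1$ equals $\bb{Z}\tilde h_1$ for some primitive vector $\tilde h_1$ parallel to $h_1$.

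The algebraic core of the argument is to set $\Psi := \psi \cdot \prod_{j=2}^{m}(X^{h_j}-1) \in \ann_{\bb{Z}_p}(\eta)$. Since $(X^{h_j}-1)\eta_j = 0$ for each $j \geq 2$, the polynomial $\Psi$ annihilates every $\eta_j$ with $j \geq 2$, and combined with $\Psi\eta = 0$ this forces $\Psi\eta_1 = 0$. Choosing a $\bb{Z}$-basis $\{\tilde h_1, v\}$ of $\bb{Z}^2$, the configuration $\eta_1$ descends to a bi-infinite sequence $\bar\eta_1 : \bb{Z} \to \bb{Z}_p$ given by $\bar\eta_1(n) := (\eta_1)_{nv}$, which is non-periodic precisely because $\eta_1$ is not fully periodic. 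Under the isomorphism $\bb{Z}_p[X^{\pm 1}]/(X^{\tilde h_1}-1) \cong \bb{Z}_p[Y^{\pm 1}]$ (with $Y \leftrightarrow X^v$), the identity $\Psi\eta_1 = 0$ translates into $\bar\Psi \cdot \bar\eta_1 = 0$. If $\bar\Psi$ were nonzero, $\bar\eta_1$ would satisfy a nontrivial linear recurrence over the finite field $\bb{Z}_p$; because both forward and backward dynamics of such a recurrence act on a finite state space, every bi-infinite solution is periodic, contradicting non-periodicity. Hence $\bar\Psi = 0$, so $\Psi = (X^{\tilde h_1}-1)\,Q$ for some nonzero $Q \in \bb{Z}_p[X^{\pm 1}]$.

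The contradiction now comes from comparing two descriptions of $\conv(\supp(\Psi))$. Because $\bb{Z}_p$ is a field, at every vertex of the Minkowski sum of the supports of two factors the coefficient of their product is a product of nonzero scalars, hence nonzero; thus $\conv(\supp(fg)) = \conv(\supp(f)) + \conv(\supp(g))$ for all nonzero $f,g \in \bb{Z}_p[X^{\pm 1}]$. Applying this to the two factorizations of $\Psi$ yields
\[
\conv(\supp(\psi)) + \conv\!\big(\supp\!\big(\textstyle\prod_{j=2}^{m}(X^{h_j}-1)\big)\big) \;=\; \conv(\{0,\tilde h_1\}) + \conv(\supp(Q)).
\]
The right-hand side contains an edge parallel to $\tilde h_1$ (hence to $h_1$), contributed by the segment $\conv(\{0,\tilde h_1\})$. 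The left-hand side has no such edge: the first summand has no edge parallel to $h_1$ by assumption, and by Remark~\ref{rem_edge_convexsupport} the edges of the second summand are parallel only to $h_2,\ldots,h_m$. This contradiction forces an edge $w' \in E(\mathcal{S}_\psi)$ parallel to $w$. The main obstacle I anticipate is this last step — the identification of the convex hull of the support of a product with the Minkowski sum of the convex hulls of the supports of the factors — for which the field hypothesis on $\bb{Z}_p$ is essential, as it guarantees that no cancellation can occur at the extreme vertices.
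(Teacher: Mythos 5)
Your route is genuinely different from the paper's. The paper argues dynamically: assuming some edge $w$ of $\mathcal{S}_{\varphi}$ has no parallel counterpart in $\mathcal{S}_{\psi}$, it uses Lemma~\ref{lem_genset_noedge_expas} and Proposition~\ref{prop_expan_period} to force $\varphi_3\eta$ to be periodic, and then rebuilds, via Lemma~\ref{lem_existmulconf}, a periodic decomposition of $\eta$ with $m-1$ summands, contradicting minimality. You instead stay entirely inside the Laurent polynomial ring: you extract a factor of $\Psi = \psi\prod_{j\geq 2}(X^{h_j}-1)$ supported on the direction of $h_1$ and compare Newton polytopes via the Minkowski-sum identity $\conv(\supp(fg)) = \conv(\supp(f))+\conv(\supp(g))$, which is valid over the domain $\bb{Z}_p$. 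The reduction $\Psi\eta_1 = 0$, the ``no $\eta_i$ is fully periodic'' observation, and the Minkowski-sum step are all correct, and if completed this argument would be more self-contained than the paper's (it invokes none of the expansiveness machinery).

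There is, however, a genuine gap at the pivot of your argument: you assert that the period lattice of $\eta_1$ is $\bb{Z}\tilde h_1$ for a \emph{primitive} vector $\tilde h_1$. A rank-one period lattice need not be generated by a primitive vector, and in that case $\eta_1$ has no primitive period at all (e.g.\ a configuration depending on $(a \bmod 2, b)$ that is $(2,0)$-periodic but not $(1,0)$-periodic). Primitivity is exactly what you use twice: for the isomorphism $\bb{Z}_p[X^{\pm 1}]/(X^{\tilde h_1}-1) \cong \bb{Z}_p[Y^{\pm 1}]$, and for the claim that a nonzero $\bar\Psi$ gives a linear recurrence over the \emph{field} $\bb{Z}_p$ with invertible extreme coefficients, whence the bijection-on-a-finite-state-space argument. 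If $\tilde h_1 = k h_1'$ with $h_1'$ primitive and $k \geq 2$, the quotient is the group ring of $\bb{Z}\times\bb{Z}/k$, i.e.\ $\bb{Z}_p[\bb{Z}/k][Y^{\pm 1}]$, the recurrence has coefficients in the finite ring $\bb{Z}_p[\bb{Z}/k]$, which is not a domain, and its extreme coefficients need not be invertible, so neither the forward/backward determinism nor the conclusion $\bar\Psi = 0$ follows as stated. I believe the step can be repaired --- decompose $\bb{Z}_p[\bb{Z}/k]$ into local factors, reduce modulo the nilradical to get recurrences over genuine finite fields, and iterate; one then obtains a factor of $\Psi$ of the form $f(X^{h_1'})$ with $\deg f \geq 1$, whose Newton polytope is still a nondegenerate segment parallel to $h_1$, which is all your Minkowski-sum comparison needs --- but this is real additional work that your write-up elides. (Your final step is also silently restricted to the nondegenerate situation where $\conv(\supp(\psi))$ has positive area, but the paper's own proof makes the same implicit assumption, so I do not count that against you.)
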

\begin{proof}
Let $\psi \in \ann_{R}(\eta)$ and suppose, by contradiction, that there exists \(w \in E(\mathcal{S}_{\varphi})\) that is not par\-al\-lel to any edge of \(\mathcal{S}_{\psi}\). According to Remark \ref{rem_edge_convexsupport}, \(w\) is either parallel or antiparallel to some vector \(h_{\iota}\), with \(1 \leq \iota \leq m\). Renaming \(\eta_1, \ldots, \eta_m\) if necessary, we may assume that \(\iota = 1\). Let \(\pmb{\ell} \subset \bb{R}^2\) be the oriented line through the origin parallel to \(w\). In particular, $\mathcal{S}_{\psi} \cap \pmb{\ell}_{\mathcal{S}_{\psi}}$ is a vertex of $\mathcal{S}_{\psi}$. Since $\mathcal{S}_{\psi}$ is an \(\eta\)-generating set, Lem\-ma~\ref{lem_genset_noedge_expas} implies that \(\pmb{\ell} \not\in \nexpd(\eta)\). Hence, by Proposition \ref{prop_expan_period} it follows that \(m \geq 3\). Thus, let \[\sigma(X) := (X^{h_1}-1)(X^{h_2}-1) \ \ \text{and} \ \ \varphi_3(X) := (X^{h_3}-1) \cdots (X^{h_m}-1).\] Note that \(\varphi_3\eta = \varphi_3\eta_1+\varphi_3\eta_2\) and \(\sigma \in \ann_{R}(\varphi_3\eta)\). Moreover, \(\mathcal{S}_{\sigma}\) has an edge \(w' \in E(\mathcal{S}_{\sigma})\) parallel to \(w\). Since \[\psi(\varphi_3\eta) = \varphi_3(\psi\eta) = 0,\] i.e., \(\psi \in \ann_{R}(\varphi_3\eta)\), then, as before, Lem\-ma~\ref{lem_genset_noedge_expas} implies that \(\pmb{\ell} \not\in \nexpd(\varphi_3\eta)\). Hence, being \(\pmb{\ell}\) parallel to \(w\) and so to \(w'\), by Proposition \ref{prop_expan_period} results that  \(\varphi_3\eta\) is periodic. 

We claim that \(\varphi_3\eta\) has a period \(u \in \bb{Z}^2\) such that the vectors \(u,h_3,h_4, \ldots, h_m\) are in pairwise distinct directions. Indeed, if \(\varphi_3\eta\) is a fully periodic configuration,\linebreak then, we may consider, for instance, a vector \(u \in \bb{Z}^2\) parallel to \(h_1\) or \(h_2\). Otherwise, the unique line \(\ell' \in \nexpl(\varphi_3\eta)\) contains any period for \(\varphi_3\eta\). Since, for some orientation, \(\pmb{\ell}' \in \nexpd(\varphi_3\eta)\), then, according to Remark \ref{rem_edges_and_one-sided_direc}, \(\pmb{\ell}'\) is either parallel or antiparallel to \(h_2\), which proves the claim.

Let $u \in \bb{Z}^2$ be a period for $\varphi_3\eta$ as in the previous claim. To conclude the proof, we will contradict the minimality of $m$. We claim that $\eta$ can be decomposed into a sum of $m-1$ periodic configurations. Indeed, by Lemma~\ref{lem_existmulconf} there exists a periodic configuration $\Theta_2 \in R[[X^{\pm 1}]]$, with $u$ a period for $\Theta_2$, such that $(X^{h_3}-1)\Theta_2 = \varphi_3\eta$. For each \(3 \leq i \leq m\), let $$\varphi_n(X) := \prod_{i = n}^{m} (X^{h_i}-1).$$ For $\Theta_3 := \varphi_4\eta-\Theta_2$, it follows that $\varphi_4\eta = \Theta_2+\Theta_3$ and $$(X^{h_3}-1)\Theta_3 = \varphi_3\eta - (X^{h_3}-1)\Theta_2 = 0.$$ Suppose we have constructed periodic configurations $\theta_2,\ldots,\theta_{m-1} \in R[[X^{\pm 1}]]$, with $u$ a period for $\theta_2$ and, for each $3 \leq i \leq m-1$, $h_i$ a period for $\theta_i$, such that $$(X^{m}-1)\eta = \theta_2+\cdots+\theta_{m-1}.$$ Thanks to Lemma~\ref{lem_existmulconf}, there exist periodic configurations $\varphi_2,\ldots,\varphi_{m-1} \in R[[X^{\pm 1}]]$, with $u$ a period for $\varphi_2$ and, for each $3 \leq i \leq m-1$, $h_i$ a period for $\varphi_i$, such that $(X^{m}-1)\varphi_i = \theta_i$. For $\varphi_{m} := \eta - \varphi_2-\cdots-\varphi_{m-1}$, it follows that $\eta = \varphi_2+\cdots+\varphi_{m}$ and $$(X^{m}-1)\varphi_{m} = (X^{m}-1)\eta -(X^{m}-1)\varphi_2 - \cdots - (X^{m}-1)\varphi_{m-1} = 0,$$ which proves the claim and contradicts the minimality of $m$.
\end{proof}

\section{Proof of main results}
\label{sec3}

\begin{lemma}\label{lemma_fullyperiodic}
Let $\vartheta \in \mathcal{A}^{\bb{Z}^2}$\!, with $\mathcal{A} \subset \bb{Z}_p$, and suppose \(\vartheta = \vartheta_1+\cdots+\vartheta_m\) is a \(\bb{Z}_p\)-pe-\linebreak riodic decomposition and \(h_i \in \bb{Z}^2\) is a period for \(\vartheta_i\), with \(1 \leq i \leq m\). If \(\vartheta\) is non-periodic, suppose in addition that the vectors \(h_1, \ldots, h_m\) are in pairwise distinct directions. If \(\vartheta = \vartheta'_1+\cdots+\vartheta'_n\), with \(n < m\), is a \(\bb{Z}_p\)-minimal periodic decomposition, then the following conditions hold.
\begin{enumerate}[(i)]\setlength{\itemsep}{6pt}
	\item  There exists a sequence \(1 \leq i_1 < \cdots < i_n \leq m\) where, for each \(1 \leq j \leq n\), there exists \(t_j \in \bb{Z}\) such that \(u'_j = t_jh_{i_j}\) is a period for \(\vartheta'_j\); 
	
	\item If \(\vartheta\) is fully periodic, then \(\vartheta = \vartheta'_1\) and condition (i) holds for every \(1 \leq i_1 \leq m\);
		
	\item For any \(1 \leq i \leq m\), with \(i \neq i_1, \ldots, i_n\), \(\vartheta_i\) is fully periodic. 
\end{enumerate}
\end{lemma}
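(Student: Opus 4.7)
My plan is to treat all three parts together, pivoting on Proposition~\ref{prop_geom_convexset}.

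For part~(i) in the non-periodic case, I would set $\varphi(X) = (X^{u'_1}-1)\cdots(X^{u'_n}-1)$ and $\psi(X) = (X^{h_1}-1)\cdots(X^{h_m}-1)$; both lie in $\ann_{\bb{Z}_p}(\vartheta)$. Since $\vartheta$ is non-periodic, $n\geq 2$ and by Remark~\ref{rem_vectorminimalperioddecomposit}(i) the vectors $u'_1,\ldots,u'_n$ lie in $n$ pairwise distinct directions, so by Remark~\ref{rem_edge_convexsupport} $\conv(\mathcal{S}_\varphi)$ has positive area with exactly $n$ edge directions; similarly $\mathcal{S}_\psi$ has $m$ edge directions by the distinct-direction hypothesis on the $h_i$. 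Proposition~\ref{prop_geom_convexset} then forces each edge direction of $\mathcal{S}_\varphi$ to appear among the edge directions of $\mathcal{S}_\psi$, i.e.\ each $u'_j$ is parallel to some $h_{\tau(j)}$, yielding (after reordering so that $\tau(1)<\cdots<\tau(n)$) the injection $j\mapsto i_j$ required by~(i). The integer $t_j$ exists because the period sublattice of $\vartheta'_j$ is rank-one in the direction of $u'_j$, and any integer vector parallel to that direction (in particular $h_{i_j}$) has an integer multiple lying inside it. Part~(ii) is then immediate: if $\vartheta$ is fully periodic, the minimal decomposition is $\vartheta=\vartheta'_1$ and the period lattice of $\vartheta'_1=\vartheta$ has rank two in $\bb{Z}^2$, so it meets every nonzero integer direction in some nonzero multiple, giving $t_1h_{i_1}$ for every choice of~$i_1$.

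The periodic-but-not-fully-periodic instance of part~(i) is not covered by Proposition~\ref{prop_geom_convexset} (which requires $\vartheta$ non-periodic), so I would argue by induction on $m$ using the auxiliary quantities $\sigma_i := (X^v-1)\vartheta_i$, where $v$ is a primitive period of $\vartheta$. Each $\sigma_i$ remains $h_i$-periodic and $\sum_i\sigma_i = (X^v-1)\vartheta = 0$. If no $h_i$ were parallel to $v$ and every $\sigma_i$ vanished, each $\vartheta_i$ would admit both $h_i$ and $v$ as linearly independent periods, forcing $\vartheta$ to be fully periodic, a contradiction; hence either some $h_i$ is already parallel to $v$ (supplying the required $i_1$) or the nonzero $\sigma_i$'s form a shorter nontrivial $\bb{Z}_p$-periodic decomposition of $0$ to which the inductive hypothesis can be applied after suitable bookkeeping. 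Part~(iii) would then follow by the parallel analysis: for each $i\notin\{i_1,\ldots,i_n\}$ the same auxiliary configuration $\sigma_i$ is the handle, and applying the inductive hypothesis to $0=\sum_i\sigma_i$ supplies full periodicity of $\sigma_i$, which together with the $h_i$-periodicity of $\vartheta_i$ is intended to upgrade to full periodicity of $\vartheta_i$ itself.

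The main obstacle is exactly this last upgrade from ``$\sigma_i$ fully periodic'' to ``$\vartheta_i$ fully periodic'', since the relation $\sigma_i=(X^v-1)\vartheta_i$ is a finite-difference relation and produces only an annihilator of the shape $(X^v-1)(X^w-1)$ in $\ann_{\bb{Z}_p}(\vartheta_i)$ (with $w$ a new period of $\sigma_i$), which does not by itself give a second period of $\vartheta_i$. Closing the step should require a primitive construction in the spirit of Lemma~\ref{lem_existmulconf}, together with careful bookkeeping to check that the distinct-directions hypothesis either is inherited or is not needed at each induction step; that interplay between the algebra of annihilators and the geometry of the support polytope is, in my view, the most delicate technical point of the argument.
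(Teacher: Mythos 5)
Your treatment of part (i) in the non-periodic case and of part (ii) is essentially the paper's argument: both rest on Proposition~\ref{prop_geom_convexset} applied to the pair of annihilators $(X^{u_1}-1)\cdots(X^{u_n}-1)$ and $(X^{h_1}-1)\cdots(X^{h_m}-1)$, plus the observation that a fully periodic configuration has a period in every rational direction. For the periodic-but-not-fully-periodic instance of (i), however, you have over-engineered: no induction on $m$ is needed, since in that case $n=1$ and the unique line $\ell\in\nexpl(\vartheta)$ contains the period of $\vartheta'_1=\vartheta$, while Remark~\ref{rem_edges_and_one-sided_direc}, applied to $(X^{h_1}-1)\cdots(X^{h_m}-1)\in\ann_{\bb{Z}_p}(\vartheta)$, forces that line to be parallel or antiparallel to some $h_{i_1}$. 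That one-line argument is all the paper uses.

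The genuine gap is part (iii). First, your device $\sigma_i=(X^v-1)\vartheta_i$ presupposes a period $v$ of $\vartheta$, so it does not even apply when $\vartheta$ is non-periodic, which is a case the lemma must cover (and the case in which it is invoked later in the paper). Second, even where it applies, you concede you cannot close the step from \emph{$\sigma_i$ fully periodic} to \emph{$\vartheta_i$ fully periodic}; as you note, the relation only produces an annihilator of the form $(X^w-1)(X^v-1)$ for $\vartheta_i$, which is not a second period, so the conclusion does not follow without further input (in characteristic $p$ one could try the Frobenius identity $(X^u-1)^{p^k}=X^{p^ku}-1$, but you do not do this, and it still leaves the non-periodic case untouched). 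The paper proves (iii) by a different, self-contained route: set $A=\{1\leq i\leq m: i\neq i_1,\ldots,i_n\}$ and $\theta=\sum_{i\in A}\vartheta_i$; then both $(X^{t_1h_{i_1}}-1)\cdots(X^{t_nh_{i_n}}-1)$ and $\prod_{i\in A}(X^{h_i}-1)$ annihilate $\theta$, and since the $h_i$ are in pairwise distinct directions the two reflected convex supports share no edge direction, so Lemma~\ref{lem_genset_noedge_expas} gives $\nexpd(\theta)=\emptyset$ and Theorem~\ref{corollary_Boyle-Lind_tmh} makes $\theta$ fully periodic. One then peels off the summands $\vartheta_{l_1},\vartheta_{l_2},\ldots$ one at a time, repeating the same two-annihilators-with-no-parallel-edges argument at each stage until the last summand is seen to be fully periodic, and varies the enumeration of $A$ to cover every index. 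You should replace your induction for (iii) with an argument of this kind.
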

\begin{proof}
If \(\vartheta\) is periodic of period \(u \in \bb{Z}^2\), but not fully periodic, then, the unique line \(\ell \in \nexpl(\vartheta)\) contains \(u\). Since, for some orientation, \(\pmb{\ell} \in \nexpd(\vartheta)\), then, according to Remark~\ref{rem_edges_and_one-sided_direc}, \(\pmb{\ell}\) and so \(u\) is either parallel or antiparallel to some vector \(h_i\), with \(1 \leq i \leq m\). Thus, for \(i_1 = i\), there exist \(t_1 \in \bb{Z}\) and \(s_1 \in \bb{Z}\) such that \(s_1u = t_1h_{i_1}\). We finish this case by setting \(u'_1 = s_1u\). If \(\vartheta\) is non-periodic, let \(u_j \in \bb{Z}^2\) be a period for \(\vartheta'_j\), with \(1 \leq j \leq n\). Since \[(X^{u_1}-1) \cdots (X^{u_n}-1), (X^{h_1}-1) \cdots (X^{h_m}-1) \in \ann_{\bb{Z}_p}(\vartheta),\] from Proposition~\ref{prop_geom_convexset} we get that each vector \(u_j\) is either parallel or antiparallel to some vector \(h_i\). Thus, there exist  a sequence \(1 \leq i_1 < \cdots < i_n \leq m\) such that, for each \(1 \leq j \leq n\), \(s_ju_j = t_jh_{i_j}\) holds for some \(t_j,s_j \in \bb{Z}\). We establish this case by setting \(u'_j = s_ju_j\) for every \(1 \leq j \leq n\). 

If \(\vartheta\) is fully periodic, then \(\vartheta_1'\) is fully periodic. Hence, for any \(1 \leq i_1 \leq m\), there exists \(t_1 \in \bb{Z}\) such that \(u'_1 = t_1h_{i_1}\) is a period for \(\vartheta_1'\), which proves conditions~(i) and (ii).\medbreak

To approach condition (iii), consider \(A = \{1 \leq i \leq m : i \neq i_1, \ldots, i_n\}\) and write \(\theta = \sum_{i \in A} \vartheta_i\). Note that \[0 = (X^{u'_1}-1) \cdots (X^{u'_n}-1)\vartheta = (X^{t_1h_{i_1}}-1) \cdots (X^{t_nh_{i_n}}-1)\theta,\] i.e., \[\varphi(X) := (X^{t_1h_{i_1}}-1) \cdots (X^{t_nh_{i_n}}-1) \in \ann_{\bb{Z}_p}(\theta).\] Clearly, \(\psi(X) := \prod_{i \in A} (X^{h_i}-1) \in \ann_{\bb{Z}_p}(\theta)\). By Lemma \ref{lem_genset_noedge_expas}, if \(\pmb{\ell}' \in \nexpd(\theta)\), then \(\pmb{\ell}'\) must be parallel to an edge of \(\mathcal{S}_{\varphi}\) and parallel to an edge of \(\mathcal{S}_{\psi}\). Since the vectors \(h_1, \ldots, h_m\) are in pairwise distinct directions,  \(\mathcal{S}_{\varphi}\) and \(\mathcal{S}_{\psi}\) does not have parallel edges, which means that \(\nexpd(\theta) = \emptyset\). Hence, \(\nexpl(\theta) = \emptyset\) and, by Theorem~\ref{corollary_Boyle-Lind_tmh}, we get that \(\theta\) is fully periodic. Let \(l_1, \ldots, l_k\), with \(k = |A|\), be an enumeration of the elements of \(A\). If \(k = 1\), there is nothing to argue. Otherwise, as \(\theta\) is fully periodic, then \(\theta-\vartheta_{l_1}\) is periodic with period parallel to \(h_{l_1}\). Let \(u''_1 = t_{l_1}h_{l_1}\), with \(t_{l_1} \in \bb{Z}\), be a period for \(\theta-\vartheta_{l_1}\). Note that \[(X^{u''_1}-1), \psi_1(X) := \prod_{\substack{i \in A\\i \neq l_1}} (X^{h_i}-1) \in \ann_{\bb{Z}_p}(\theta-\vartheta_{l_1}).\] By Lemma \ref{lem_genset_noedge_expas}, if \(\pmb{\ell}' \in \nexpd(\theta-\vartheta_{l_1})\), then we must have \(u''_1 \in \pmb{\ell}'\) and \(\pmb{\ell}'\) must be parallel to an edge of \(\mathcal{S}_{\psi_1}\). So as before, \(\nexpl(\theta-\vartheta_{l_1}) = \emptyset\) and, by Theorem~\ref{corollary_Boyle-Lind_tmh}, we get that \(\theta-\vartheta_{l_1}\) is fully periodic. Proceeding this way, suppose we have showed that \[\theta-\vartheta_{l_1}-\cdots-\vartheta_{l_{k-2}} = \vartheta_{l_{k-1}}+\vartheta_{l_k}\] is fully periodic. Hence, \(\theta-\vartheta_{l_1}-\cdots-\vartheta_{l_{k-1}} = \vartheta_{l_k}\) is periodic with period parallel to \(h_{l_{k-1}}\). Being \(\vartheta_{l_k}\) periodic of period \(h_{l_k}\), we get that \(\vartheta_{l_k}\) is fully periodic. Therefore, as \(l_1, \ldots, l_k\) is an arbitrary enumeration of the elements of \(A\), then, for any \(1 \leq i \leq m\), with \(i \neq i_1, \ldots, i_n\), \(\vartheta_i\) is fully periodic, which establishes condition (iii) and proves the lemma.
\end{proof}

Let $\eta \in \mathcal{A}^{\bb{Z}^2}$, with \(\mathcal{A} \subset \bb{Z}_+\), and suppose \(\eta = \eta_1+\cdots+\eta_m\) is a \(\bb{Z}\)-minimal periodic decomposition.  For a prime number \(p \in \bb{N}\), we may consider the \(\bb{Z}_p\)-perio- dic decomposition \(\overline{\eta} = \overline{\eta}_1+\cdots+\overline{\eta}_m\), where the bar denotes the congruence modulo \(p\). If \(p\) is large enough so that \(\mathcal{A} \subset [[p]]\), then \(\eta\) and \(\overline{\eta}\) are the same configurations, i.e., the bar does not change \(\eta\). However, each configuration \(\overline{\eta}_i\) is defined on the finite alphabet \(\bb{Z}_p\). Furthermore, if \(\psi(X) = \sum_{i = 1}^{n} a_iX^{u_i}\), with \(a_i \in \bb{Z}\) and \(u_i \in \bb{Z}^2\), annihilates \(\eta\), then by (\ref{defproduct}) we get that \[\overline{a_1}\cdot\overline{\eta}_{g-u_1}+\cdots+\overline{a_n} \cdot \overline{\eta}_{g-u_n} = 0 \quad \forall g \in \bb{Z}^2,\] that is, \[\overline{\psi}(X) = \sum_{i = 1}^{n} \overline{a_i}X^{u_i} \in \ann_{\bb{Z}_p}(\overline{\eta}).\]

\subsection{Proof of Theorem \ref{mainthm1}}
If \(m = 1\), the result holds trivially. Otherwise, for each \(1 \leq i \leq m\), let \(h_i \in \bb{Z}^2\) be a period for \(\eta_i\) and let \[\varphi_i(X) = \prod_{\substack{k=1\\k \neq i}}^{m} (X^{h_k}-1).\] Note that each configuration \(\varphi_i\eta = \varphi_i\eta_i\) is defined on a finite alphabet. Thus, for each \(1 \leq i \leq m\), let \(N_i \in \bb{N}\) be such that \[(\varphi_i\eta+N_i1\hspace{-0.13cm}1)_g \in \bb{Z}_+ \quad \forall \ g \in \bb{Z}^2,\] where \(1\hspace{-0.13cm}1_g = 1\) for all \(g \in \bb{Z}^2\). Let \(p \in \bb{N}\), with \(\mathcal{A} \subset [[p]]\), be a prime number large enough so that, for any \(1 \leq i \leq m\), \[\{(\varphi_i\eta_i+N_i1\hspace{-0.13cm}1)_g : g \in \bb{Z}^2\} \subset [[p]].\] 

We claim that, for each \(1 \leq i \leq m\), \(\overline{\eta}_i\) is not fully periodic. Indeed, suppose, by contradiction, that \(\overline{\eta}_i\) is fully periodic. Since \[\overline{\varphi_i\eta+N_i1\hspace{-0.13cm}1} = \overline{\varphi_i\eta_i+N_i1\hspace{-0.13cm}1} = \varphi_i\overline{\eta}_i+\overline{N_i1\hspace{-0.13cm}1},\] where on the right side \(\varphi_i \in \bb{Z}_p[X^{\pm 1}]\), then \(\overline{\varphi_i\eta+N_i1\hspace{-0.13cm}1}\) is fully periodic. However, as  \[\{(\varphi_i\eta_i+N_i1\hspace{-0.13cm}1)_g : g \in \bb{Z}^2\} \subset [[p]],\] the bar does not change \(\varphi_i\eta+N_i1\hspace{-0.13cm}1\), which means that \(\varphi_i\eta+N_i1\hspace{-0.13cm}1\) and so \(\varphi_i\eta\) is fully periodic. Consider \(1 \leq j \leq m\), with \(j \neq i\), and let \(h'_j = t_{j}h_{j}\), with \(t_{j} \in \bb{Z}\), be a period for \(\varphi_i\eta\). Then \[(X^{h'_j}-1)\varphi_i\eta = 0,\] i.e., \(\psi(X) := (X^{h'_j}-1)\varphi_i(X) \in \ann_{\bb{Z}}(\eta)\). Since $\mathcal{S}_{\psi}$ does not have any edge parallel to $h_i$, by Pro\-po\-si\-tion~\ref{prop_geom_convexset} we reach a contradiction, which establishes the claim.\medbreak

We claim that \(\overline{\eta} = \overline{\eta}_1+\cdots+\overline{\eta}_m\) is a \(\bb{Z}_p\)-mini\-mal periodic decomposition. Indeed, if \(\overline{\eta} = \eta'_1+\cdots+\eta'_n\), with \(n < m\), is a \(\bb{Z}_p\)-mi\-ni\-mal periodic decomposition, then, according to Lemma \ref{lemma_fullyperiodic}, there exists a sequence \(1 \leq i_1 < \cdots < i_n \leq m\) such that, for each \(1 \leq i \leq m\), with \(i \neq i_1, \ldots, i_n\), \(\overline{\eta}_i\) is fully periodic, which due to the previous claim is a contradiction.\hfill\(\Box\)\medbreak                        

Let $\eta \in \mathcal{A}^{\bb{Z}^2}$ be a configuration and suppose $\mathcal{U} \subset \bb{Z}^2$ is a non-empty, convex set. We say that \(\eta\sob{\mathcal{U}}\) is \emph{periodic} if there exists a vector $h \in (\bb{Z}^2)^*$, called \emph{period of \(\eta\sob{\mathcal{U}}\)},\linebreak such that $$\eta_{g-h} = \eta_{g} \quad \forall g \in \mathcal{U} \cap (\mathcal{U}+h).$$ If \(\eta\sob{\mathcal{U}}\) has two periods linearly independents over \(\bb{R}^2\), we say that \(\eta\sob{\mathcal{U}}\) is \emph{fully peri- odic}.	

\begin{theorem}\label{mainthm_finitealphabet}
Let $\eta \in \mathcal{A}^{\bb{Z}^2}$, with \(\mathcal{A} \subset \bb{Z}_p\), be a not fully periodic configuration and suppose \(\eta = \eta_1+\cdots+\eta_m\) is a \(\bb{Z}_p\)-minimal periodic decomposition. If for any configuration \(x \in \overline{Orb \, (\eta)}\) the subshift \(\overline{Orb \, (x)}\) has a periodic configuration, then, for each $1 \leq i \leq m$, a line $\ell \subset \bb{R}^2$ contains a period for $\eta_i$ if and only if $\ell \in \nexpl(\eta)$.
\end{theorem}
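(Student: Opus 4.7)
\medbreak\noindent\textbf{Proof plan.}
The converse implication, that \(\ell \in \nexpl(\eta)\) forces \(\ell\) to contain a period of some \(\eta_i\), is the \(\bb{Z}_p\)-analogue of Remark~\ref{rem_converSazabados'sconj} and transfers verbatim. So only the forward direction requires work: for each \(1 \leq i \leq m\), the line \(\ell_i\) through the origin parallel to a period \(h_i\) of \(\eta_i\) lies in \(\nexpl(\eta)\). By Remark~\ref{rem_vectorminimalperioddecomposit}, we may pick periods \(h_1,\ldots,h_m\) in pairwise distinct directions. My plan is induction on the \(\bb{Z}_p\)-order \(m\). The base \(m=1\) is immediate since \(\eta\) is singly periodic with \(\nexpl(\eta)=\{\ell_1\}\). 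The base \(m=2\) follows from Proposition~\ref{prop_expan_period} applied to \(\varphi(X)=(X^{h_1}-1)(X^{h_2}-1)\): if some orientation of \(\ell_i\) parallel to an edge of \(\mathcal{S}_\varphi\) were one-sided expansive, then \(\eta\) would be periodic, of \(\bb{Z}_p\)-order~\(1\), contradicting \(m=2\).

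For the inductive step \(m\geq 3\), fix \(i=1\), assume \(\ell_1\notin\nexpl(\eta)\) for contradiction, and write \(X:=\overline{Orb\,(\eta)}\). Two structural facts guide the argument. First, since \(\prod_{k=1}^{m}(X^{h_k}-1)\in\ann_{\bb{Z}_p}(\eta)\) is preserved under shifts and pointwise limits, every \(\zeta\in X\) inherits this annihilator and hence admits a \(\bb{Z}_p\)-periodic decomposition \(\zeta=\zeta_1+\cdots+\zeta_m\) with \(\zeta_j\) periodic of period \(h_j\). Second, \(\nexpl(\overline{Orb\,(y)})\subseteq\nexpl(X)\) for every \(y\in X\), so the assumption \(\ell_1\notin\nexpl(\eta)\) prohibits any non-fully-periodic \(y\in X\) from having period direction parallel to \(h_1\); equivalently, every periodic \(y\in X\) whose \(\bb{Z}_p\)-minimal decomposition has period parallel to \(h_1\) is forced to be fully periodic.

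The crux is to manufacture, using the hypothesis, a periodic \(y\in X\) with period direction parallel to \(h_1\). I would consider subsequential limits \(T^{n_k v}\eta\to\tilde{\eta}\in X\) along a direction \(v\) transverse to \(h_1\), noting that the \(h_1\)-periodicity of the \(\eta_1\)-component is preserved in the limit while the remaining components are perturbed; then invoke the hypothesis on \(\tilde{\eta}\) to extract a periodic \(y\in\overline{Orb\,(\tilde{\eta})}\subseteq X\). Lemma~\ref{lemma_fullyperiodic} applied to \(y\)---which has \(\bb{Z}_p\)-order~\(1\) and inherits the decomposition from the annihilator above---identifies the index \(i_1\) of its period direction. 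If one can arrange \(i_1=1\) with \(y\) not fully periodic, the second observation yields the contradiction. If \(y\) is forced to be fully periodic, Lemma~\ref{lemma_fullyperiodic}(iii) renders all remaining components of its decomposition fully periodic too, and Lemma~\ref{lem_existmulconf}---in the spirit of the closing construction in the proof of Proposition~\ref{prop_geom_convexset}---assembles these data into a \(\bb{Z}_p\)-decomposition of \(\eta\) of order strictly less than \(m\), contradicting minimality.

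The main obstacle is precisely the selection of the shift direction \(v\): the hypothesis provides only the \emph{existence} of a periodic configuration in each orbit closure, with no control over its period direction, so forcing \(i_1=1\) in Lemma~\ref{lemma_fullyperiodic} requires a delicate interplay between the algebraic constraints imposed by the annihilator \(\prod_{k=1}^{m}(X^{h_k}-1)\) and the geometric constraints on admissible period directions (in particular, the fact that \(h_1,\ldots,h_m\) lie in pairwise distinct directions). Once such a \(y\) is in hand, the reduction to a smaller \(\bb{Z}_p\)-order decomposition of \(\eta\) via Lemma~\ref{lem_existmulconf} proceeds routinely.
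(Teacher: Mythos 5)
Your setup is sound: the converse direction via Remark~\ref{rem_converSazabados'sconj}, the base cases $m=1,2$ via Proposition~\ref{prop_expan_period}, the induction on the $\bb{Z}_p$-order, the passage to a periodic limit configuration whose decomposition (obtained by compactness of $\bb{Z}_p^{\bb{Z}^2}$, not merely from the inherited annihilator) is analysed with Lemma~\ref{lemma_fullyperiodic}, and the containment $\nexpl(\overline{Orb\,(y)}) \subseteq \nexpl(\eta)$ for $y \in \overline{Orb\,(\eta)}$ all match the paper. But the step you flag as ``the main obstacle'' --- forcing the surviving index of Lemma~\ref{lemma_fullyperiodic} to be the one you care about --- is precisely the content of the theorem, and your plan leaves it unresolved. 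The paper does not resolve it by choosing a clever shift direction $v$ or by contradicting the minimality of $m$; it takes an \emph{arbitrary} periodic $\vartheta = \lim T^{g_n}\eta$, lets $\iota_1$ be whatever index Lemma~\ref{lemma_fullyperiodic} produces, and then, for each $k \neq \iota_1$, runs a geometric argument: since $\vartheta_k$ is fully periodic on larger and larger boxes $R_l$ but $\eta_k$ is not fully periodic, there is a maximal strip or half-plane $L_{\ell_k}(l)$ on which $T^{g_{n_l}}\eta_k$ is fully periodic, and one re-centres at a point $-s_lu_j$ near its edge. The resulting limit $\varTheta$ has $\bb{Z}_p$-order $n \leq m-1$ (its $j$-th component is fully periodic), yet its $k$-th component is \emph{not} fully periodic (full periodicity would contradict maximality of $L_{\ell_k}(l)$), so $k$ survives in Lemma~\ref{lemma_fullyperiodic} applied to $\varTheta$; the induction hypothesis applied to $\varTheta$ then gives $\ell_k \in \nexpl(\varTheta) \subseteq \nexpl(\eta)$. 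The leftover index $\iota_1$ is handled exactly by your ``second structural fact'': either $\vartheta$ is fully periodic and one reruns the construction with a different choice, or the unique nonexpansive line of $\vartheta$ is $\ell_{\iota_1}$.

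Two further points in your fallback branch would fail as stated. First, ``every $\zeta \in X$ inherits the annihilator and hence admits a periodic decomposition with periods $h_j$'' needs the compactness argument on the components $T^{g_n}\eta_i$ (this is why Theorem~\ref{mainthm1} and the finite alphabet $\bb{Z}_p$ matter); the annihilator alone does not hand you the decomposition aligned with the $\eta_i$. Second, and more seriously, if the periodic configuration $y$ you extract is fully periodic, the full periodicity of the components of $y$ lives only at the limit point: Lemma~\ref{lem_existmulconf} cannot transport that information back to $\eta$ itself to produce a decomposition of $\eta$ into fewer than $m$ periodic configurations, since $y$ is in the orbit \emph{closure} and generally not in the orbit. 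The paper never contradicts the minimality of $m$ for $\eta$ in this proof; minimality is only contradicted in Proposition~\ref{prop_geom_convexset}, where the periodic object $\varphi_3\eta$ is an honest polynomial image of $\eta$, not a limit. So the gap is genuine: the heart of the inductive step is missing, and the proposed escape route does not close it.
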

\begin{proof}
The proof will be done by induction on \(m\). If \(m = 1\), the theorem holds trivially. If \(m = 2\), the theorem follows by Proposition~\ref{prop_expan_period}. Thus we may assume \(m \geq 3\). Suppose, by induction hypothesis, that Theorem~\ref{mainthm_finitealphabet} holds for any not fully periodic configuration with \(\bb{Z}_p\)-order \(n \leq m-1\). 

Let \(h_i \in \bb{Z}^2\) be a period for \(\eta_i\), with \(1 \leq i \leq m\). Let \(\vartheta \in \overline{Orb \, (\eta)}\) be a periodic configuration and suppose \((g_n)_{n \in \bb{N}} \subset \bb{Z}^2\) is a sequence such that \[\displaystyle\lim_{n \to +\infty} T^{g_n}\eta = \vartheta.\] By passing to a subsequence, we may assume that, for any \(1 \leq i \leq m\), \[\displaystyle\lim_{n \to +\infty} T^{g_{n}}\eta_i = \vartheta_i.\] Since \(T^{g_n}\eta = T^{g_n}\eta_1+\cdots+ T^{g_n}\eta_m\), then \(\vartheta = \vartheta_1+\cdots+\vartheta_m\) is a \(\bb{Z}_p\)-periodic decomposition, where \(\vartheta_i\) is periodic of period \(h_i\), with \(1 \leq i \leq m\). If we denote \(\vartheta = \vartheta'_1\), then \(\vartheta = \vartheta'_1\), with \(1 < m\), is a \(\bb{Z}_p\)-minimal periodic de\-com\-po\-si\-tion. Thus, according to Lemma~\ref{lemma_fullyperiodic}, there exists \(1 \leq \iota_1 \leq m\) such that, for any \(1 \leq i \leq m\), with \(i \neq \iota_1\), \(\vartheta_i\) is fully periodic. 

Given \(1 \leq k \leq m\), with \(k \neq \iota_1\), let \(1 \leq j \leq m\), with \(j \neq \iota_1\), be such that \(j \neq k\). Let \(u_j = t_jh_j\), with \(t_j \in \bb{Z}\), be a period for \(\vartheta_k\) and consider \[R_{l} := \left\{tu_j+sh_k \in \bb{Z}^2: t,s \in [-l,l]\right\} \quad (l \in \bb{N}).\] For each \(l \in \bb{N}\), there exists \(n_l \in \bb{N}\) such that, for any \(1 \leq i \leq m\), with \(i \neq \iota_1\), \[(T^{g_{n_l}}\eta_{i})\sob{R_{l}} = \vartheta_i\sob{R_{l}}.\] Let \(\ell_j,\ell_k \subset \bb{R}^2\) be the lines through the origin such that \(u_j \in \ell_j\) and \(h_k \in \ell_k\) and consider the strips \[S_{\ell_j}(l) := \{g+tu_j \in \bb{Z}^2 : g \in R_{l}, \ t \in \bb{Z}\}\] and \[S_{\ell_k}(l) := \{g+th_k \in \bb{Z}^2 : g \in R_{l}, \ t \in \bb{Z}\}.\] Note that \((T^{g_{n_l}}\eta_k)\sob{S_{\ell_k}(l)}\) is fully periodic of periods \(u_j\) and \(h_k\). Being \(\eta_k\) not fully periodic, we may consider the largest convex set \(L_{\ell_k}(l) \subset \bb{Z}^2\), with \(S_{\ell_k}(l) \subset L_{\ell_k}(l)\), such that
\begin{enumerate}[(i)]\setlength{\itemsep}{6pt}
	\item \((T^{g_{n_l}}\eta_k)\sob{L_{\ell_k}(l)}\) is fully periodic of periods \(u_j\) and \(h_k\);
	\item \(g \in L_{\ell_k}(l)\) if and only if \((g+\ell_k) \cap \bb{Z}^2 \subset L_{\ell_k}(l)\).
\end{enumerate}

\vspace{10pt}

\noindent Of course, \(L_{\ell_k}(l)\) is either a strip or a half plane, which implies that \(L_{\ell_k}(l)\) has at least one edge \(w_{\ell_k}(l) \in E(L_{\ell_k}(l))\). Let \(s_{l} \in \bb{Z}\) be such that \(-s_lu_j\) is closest as possible from \(w_{\ell_k}(l)\) (see Figure \ref{fig1}).
\begin{figure}[h!]
	\centering\def\svgwidth{7.4cm}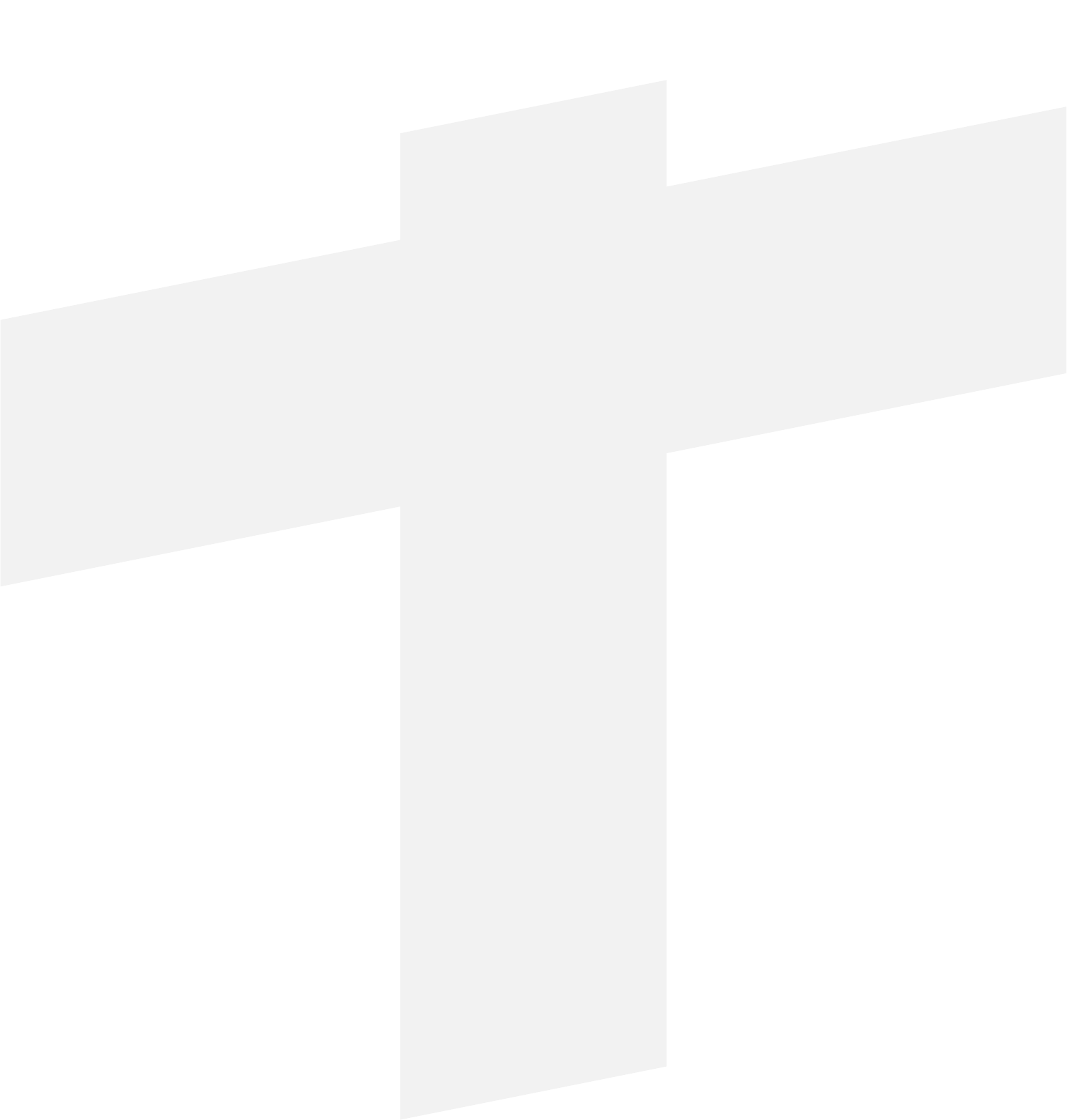
	\caption{The strips \(S_{\ell_k}(l)\) and \(S_{\ell_j}(l)\) and the set \(L_{\ell_k}(l)\).}
	\label{fig1}
\end{figure}
By passing to a subsequence, we may assume that \[\lim_{l \to +\infty} T^{g_{n_l}+s_lu_j}\eta = \varTheta\] and, for each \(1 \leq i \leq m\), \[\lim_{l \to +\infty} T^{g_{n_l}+s_lu_j}\eta_i = \varTheta_i.\] Since \(T^{g_{n_l}+s_lu_j}\eta = T^{g_{n_l}+s_lu_j}\eta_1+\cdots+T^{g_{n_l}+s_lu_j}\eta_m\), then \(\varTheta = \varTheta_1+\cdots+\varTheta_m\) is a \(\bb{Z}_p\)-periodic decomposition, where \(h_i\) is a period for \(\varTheta_i\), with \(1 \leq i \leq m\). By construction, \(\varTheta_j\)  is fully periodic, which implies that \(\varTheta\) has \(\bb{Z}_p\)-order lower than \(m\). Let \(\varTheta = \varTheta'_1+\cdots+\varTheta'_n\), with \(n \leq m-1\), be a \(\bb{Z}_p\)-minimal periodic decomposition. Due to Lemma \ref{lemma_fullyperiodic}, there exists a sequence \(1 \leq i_1 < \cdots < i_n \leq m\) such that, for each \(1 \leq i\ \leq m\), with \(i \neq i_1,\ldots, i_n\), \(\varTheta_i\) is fully periodic. 

\begin{claim}
\(k \in \{i_1, \ldots, i_n\}\).
\end{claim}
Indeed, if \(k \neq i_1,\ldots, i_n\), then, by the previous discussion, \(\varTheta_k\) is fully periodic. For \(r \in \bb{N}\) as large as we want, there exists \(l \in \bb{N}\), with \(l > r\), such that \[(T^{g_{n_l}}\eta_k)\sob{R_{r}-s_lu_j}\] is fully periodic. But, since \(L_{\ell_k}(l) \cap (R_{r}-s_lu_j)\) can be as large as we want, by item (i)  we get that \[(T^{g_{n_l}}\eta_k)\sob{L_{\ell_k}(l) \cup (S_{\ell_k}(r)-s_lu_j)}\] is fully periodic of periods \(u_j\) and \(h_k\), which is a contradiction and establishes the claim.\medbreak

Note that \(\varTheta\) can not be fully periodic, since, otherwise, \(\varTheta = \varTheta'_1\) and con\-di\-tion~(ii) of Lemma~\ref{lemma_fullyperiodic} allows us to consider \(i_1 \neq k \), which due to the previous claim is a contradiction. Thus, since \(\varTheta\) has \(\bb{Z}_p\)-order \(n \leq m-1\) and for any configuration \(x\) \(\in \overline{Orb \, (\varTheta)}\) the subshift \(\overline{Orb \, (x)}\) has a periodic configuration, by induction hypothesis Szabados's conjecture holds for \(\varTheta\). 

Let \(\ell_1, \ldots, \ell_{m} \subset \bb{R}^2\) be the lines through the origin such that, for each \(1 \leq i \leq m\), \(h_i \in \ell_i\).

\begin{claim}
\(\ell_1, \ldots, \ell_{m} \in \nexpl(\eta)\).
\end{claim}
Indeed, since \(k \in \{i_1, \ldots, i_n\}\), it follows by condition (i) of Lemma~\ref{lemma_fullyperiodic} that some configuration \(\varTheta'_i\), with \(1 \leq i \leq n\), has a period parallel to \(h_k\). Thus, as Szabados's conjecture holds for \(\varTheta\), then \(\ell_k \in \nexpl(\varTheta) \subset \nexpl(\eta)\). Being \(1 \leq k \leq m\), with \(k \neq \iota_1\), an arbitrary integer, we conclude that \(\ell_i \in \nexpl(\eta)\) for every \(1 \leq i \leq m\), with \(i \neq \iota_1\). 

If \(\vartheta\) is fully periodic, then by repeating the same construction for \(1 \leq l_1 \leq m\), with \(l_1 \neq \iota_1\) (see condition (ii) of Lemma~\ref{lemma_fullyperiodic}), we get that \(\ell_{\iota_1} \in \nexpl(\eta)\). 

If \(\vartheta = \vartheta'_1\) is not fully periodic, the unique line \(\ell' \in \nexpl(\vartheta'_1) \subset \nexpl(\eta)\) contains any period for \(\vartheta'_1\). Thanks to condition (i) of Lemma~\ref{lemma_fullyperiodic}, we see that such a period is either parallel or antiparallel to \(h_{\iota_1}\), which implies that \(\ell' = \ell_{\iota_1}\) and proves the claim.\medbreak

Therefore, the inductive step is done, which proves Theorem~\ref{mainthm_finitealphabet}.
\end{proof}

\subsection{Proof of Theorem \ref{mainthm2}} Initially, changing the alphabet if necessary, we may assume that \(\mathcal{A} \subset \bb{Z}_+\). Thanks to Theorem \ref{mainthm1}, there exist a prime \(p \in \bb{N}\), with \(\mathcal{A} \subset [[p]]\), such that \(\overline{\eta} = \overline{\eta}_1+\cdots+\overline{\eta}_m\) is a \(\bb{Z}_p\)-minimal periodic decomposition. Since $\ell \in \nexpl(\eta)$ if and only if $\ell \in \nexpl(\overline{\eta})$, the result follows by Theorem \ref{mainthm_finitealphabet}.\hfill\(\Box\).\medbreak

\end{document}